\documentclass{article}[12pt]
\usepackage{amsmath,amsthm,amscd,amsfonts,amssymb,color}
\usepackage{cite}
\usepackage[mathscr]{eucal}
\usepackage{mathtools}

\usepackage[left=3.50cm, right=3.50cm, top=2.50cm, bottom=2.50cm]{geometry}
\usepackage[utf8]{inputenc}
\usepackage{fancyhdr}
\usepackage{lipsum}
\usepackage{semantic}
\usepackage{enumitem}
\newtheorem{theorem}{Theorem}[section]

\newtheorem{corollary}[theorem]{Corollary}

\theoremstyle{definition}
\newtheorem{definition}[theorem]{Definition}
\newtheorem{example}[theorem]{Example}
\newtheorem{Open Prob}[theorem]{Open Problem}
\theoremstyle{remark}

\begin{document}

\title{\textbf{A generalization of Kannan and Fisher fixed point theorems with an application to Volterra type integral equations}}

\author{\textbf{Rivu Bardhan$ ^{a}$}, \textbf{Cenep Özel$^b$}, \textbf{Liliana Guran$^c$} \\ \small{ $^{a}$Shiv Nadar University, Gautam Budhha Nagar, Noida, Uttar Pradesh, India}\\ \small{Email: rb212@snu.edu.in}\\ \small{$^b$ Department of Mathematics, Faculty of Sciences, King Abdulaziz University, Jeddah, Saudi Arabia }\\ \small{Email: cenep.ozel@gmail.com}\\
\small{$^c$ Department of Pharmaceutical Sciences, "Vasile Goldi\c s" Western University of Arad,
Arad, Romania }\\ \small{Email: guran.liliana@uvvg.ro}}

\normalfont
	
\date{}
\maketitle
\begin{abstract}
In this paper, we discuss about the independent types of infinite extensions to a general version of Kannan \cite{Kan} and Fisher \cite{Fis} of which the well-known Kannan and Fisher theorems come as a corollaries. We also provide a strong connection between continuous Kannan operator and Fisher operator in a restricted type of metric space. We also provide an application of the main theorem of this paper, in the field of integral equations.
\end{abstract}

\vspace*{5pt}

\noindent \textbf{Keywords:} Fixed point, Geraghty's theorem, Fisher's theorem, Kannan's theorem, complete metric space, infinite dimension.\\

\noindent \textbf{AMS Subject classification:}	47H10, 54H25.\\

\section{Introduction and Preliminaries}

\quad The fixed point theory is an important tool of nonlinear analysis very useful in the proof of the existence of a solution of different types of equations. Banach has given in 1922 the fundamental theorem of contraction operators (see \cite{Ban}) starting a new research direction. Later, Kannan and Fisher have given, totaly independent, two type of contractions, which leads to a unique fixed point for the operators. But, it is not hard to notice that the two contractions defined in \cite{Kan} and \cite{Fis} has a similar pattern.

Recently, in the paper of M. Alshumrani, H. Aydi, S. Hazra, C. Özel (see \cite{Kho}) introduced the notion of $\eta$-dimensional extension of the Geraghty's result, which give a new way to extend the Banach's fixed point result.

Let us recall further, some results given in related literature, useful in the development of our new results.

\begin{theorem} (\cite{G}) Let $(\overline{X}, d)$ be a complete metric space and $\mathcal{U} : \overline{X} \to \overline{X}$ be a
mapping. If $\mathcal{U}$ satisfies the following inequality:
\begin{align*}
d(\mathcal{U} \overline{x}, \mathcal{U} \overline{y}) \leq \beta(d(\overline{x}, \overline{y})) d(\overline{x}, \overline{y})
\end{align*}
 where $\beta : [0, \infty) \to [0, 1)$ is a function which satisfies the condition
 \begin{align*}
\lim\limits_{n\to \infty}\beta(t_n) = 1 ~implies \lim\limits_{n\to \infty}t_n = 0.
\end{align*}
\end{theorem}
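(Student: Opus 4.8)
The statement as displayed records only the hypotheses; I read it as the full Geraghty theorem, whose conclusion is that $\mathcal{U}$ has a unique fixed point $\overline{x}^* \in \overline{X}$ and that the Picard iterates from any starting point converge to it. The plan is to run the Picard iteration and invoke the defining property of $\beta$ twice: first to force the consecutive step sizes to $0$, and then to establish the Cauchy property.

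Fix $\overline{x}_0 \in \overline{X}$, set $\overline{x}_{n+1} = \mathcal{U}\overline{x}_n$, and write $d_n = d(\overline{x}_n, \overline{x}_{n+1})$. If some $d_n = 0$ we are done, so assume $d_n > 0$ for all $n$. The contractive inequality gives $d_{n+1} \le \beta(d_n)\, d_n < d_n$, so $(d_n)$ is strictly decreasing and bounded below, hence converges to some $r \ge 0$. If $r > 0$, then $d_{n+1}/d_n \le \beta(d_n) < 1$ for every $n$; letting $n \to \infty$ the left-hand side tends to $1$, so $\beta(d_n) \to 1$, and the hypothesis on $\beta$ yields $d_n \to 0$, contradicting $r > 0$. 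Thus $d_n \to 0$.

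The main step is to show $(\overline{x}_n)$ is Cauchy, and I expect this to be the only real obstacle. Suppose not: then there are $\varepsilon > 0$ and indices $m_k > n_k \ge k$ with $d(\overline{x}_{m_k}, \overline{x}_{n_k}) \ge \varepsilon$, and we may take $m_k$ minimal with this property, so that $d(\overline{x}_{m_k - 1}, \overline{x}_{n_k}) < \varepsilon$. Combining the triangle inequality with $d_n \to 0$ shows that both $d(\overline{x}_{m_k}, \overline{x}_{n_k}) \to \varepsilon$ and $d(\overline{x}_{m_k+1}, \overline{x}_{n_k+1}) \to \varepsilon$. Applying the contractive inequality to the pair $(\overline{x}_{m_k}, \overline{x}_{n_k})$,
\[
\frac{d(\overline{x}_{m_k+1}, \overline{x}_{n_k+1})}{d(\overline{x}_{m_k}, \overline{x}_{n_k})} \le \beta\bigl(d(\overline{x}_{m_k}, \overline{x}_{n_k})\bigr) < 1 ,
\]
and since the left-hand side tends to $1$, a squeeze forces $\beta\bigl(d(\overline{x}_{m_k}, \overline{x}_{n_k})\bigr) \to 1$; the property of $\beta$ then gives $d(\overline{x}_{m_k}, \overline{x}_{n_k}) \to 0$, contradicting $d(\overline{x}_{m_k}, \overline{x}_{n_k}) \ge \varepsilon$. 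Hence $(\overline{x}_n)$ is Cauchy and, by completeness, converges to some $\overline{x}^* \in \overline{X}$.

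It remains to identify $\overline{x}^*$ as the unique fixed point. For existence, $d(\mathcal{U}\overline{x}^*, \overline{x}^*) \le d(\mathcal{U}\overline{x}^*, \mathcal{U}\overline{x}_n) + d(\overline{x}_{n+1}, \overline{x}^*) \le \beta(d(\overline{x}^*, \overline{x}_n))\, d(\overline{x}^*, \overline{x}_n) + d(\overline{x}_{n+1}, \overline{x}^*) \le d(\overline{x}^*, \overline{x}_n) + d(\overline{x}_{n+1}, \overline{x}^*) \to 0$, so $\mathcal{U}\overline{x}^* = \overline{x}^*$. For uniqueness, if $\overline{y}^* = \mathcal{U}\overline{y}^*$ with $\overline{y}^* \ne \overline{x}^*$, then $d(\overline{x}^*, \overline{y}^*) \le \beta(d(\overline{x}^*, \overline{y}^*))\, d(\overline{x}^*, \overline{y}^*) < d(\overline{x}^*, \overline{y}^*)$, which is absurd. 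Everything outside the Cauchy step is routine bookkeeping with the triangle inequality and the two elementary consequences of the $\beta$-condition established above.
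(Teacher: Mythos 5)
Your proposal is a correct and complete proof of Geraghty's theorem (you rightly supplied the conclusion — unique fixed point and convergence of Picard iterates — which the paper's printed statement omits); note, however, that the paper itself does not prove this result: it is recalled from the cited reference \cite{G} as a preliminary. Your argument (Picard iteration, monotonicity of $d_n=d(\overline{x}_n,\overline{x}_{n+1})$ with the $\beta$-limit property forcing $d_n\to 0$, a contradiction-based Cauchy step via minimal indices $m_k$, then existence and uniqueness of the fixed point) is exactly the scheme the paper deploys for its own main result, Theorem \ref{3}, so it is consistent with, rather than divergent from, the paper's methodology.
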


\begin{theorem} (\cite{Kan}) Let $(\overline{X}, d)$ be a complete metric space and $\mathcal{U} : \overline{X} \to \overline{X}$ be a
mapping. If $\mathcal{U}$ satisfies the following inequality:
\begin{align*}
d(\mathcal{U} \overline{x}, \mathcal{U} \overline{y}) \leq c( d(\mathcal{U}\overline{x}, \overline{x})+ d(\mathcal{U}\overline{y},\overline{y}) )
\end{align*}
 where $c~\in(0,1/2)$, then $\mathcal{U}$ has a unique fixed point $\overline{w} \in X. $
\end{theorem}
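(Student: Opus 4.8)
The plan is to mimic the classical Picard-iteration argument behind the Banach contraction principle, with the standard rearrangement trick that converts the Kannan condition into a genuine geometric contraction estimate along a single orbit. Fix an arbitrary $x_0 \in \overline{X}$ and define $x_{n+1} = \mathcal{U} x_n$ for $n \geq 0$. Applying the hypothesis with $\overline{x} = x_{n-1}$, $\overline{y} = x_n$ and using $\mathcal{U} x_{n-1} = x_n$, $\mathcal{U} x_n = x_{n+1}$ gives
\[
d(x_n, x_{n+1}) \leq c\bigl(d(x_{n-1}, x_n) + d(x_n, x_{n+1})\bigr).
\]
Since $c \in (0, 1/2)$, the coefficient $1-c$ is positive and strictly larger than $c$, so I can solve for $d(x_n, x_{n+1})$ to obtain $d(x_n, x_{n+1}) \leq k\, d(x_{n-1}, x_n)$ with $k := c/(1-c) \in (0,1)$, and hence by induction $d(x_n, x_{n+1}) \leq k^n d(x_0, x_1)$.

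Next I would run the routine telescoping estimate: for $m > n$,
\[
d(x_n, x_m) \leq \sum_{j=n}^{m-1} d(x_j, x_{j+1}) \leq \frac{k^n}{1-k}\, d(x_0, x_1) \xrightarrow[n\to\infty]{} 0,
\]
so $(x_n)$ is a Cauchy sequence; completeness of $\overline{X}$ then yields a limit $\overline{w} \in \overline{X}$ with $x_n \to \overline{w}$. The one genuinely delicate point is the next step, because the Kannan condition does \emph{not} force $\mathcal{U}$ to be continuous, so I cannot simply pass to the limit in the recursion $x_{n+1} = \mathcal{U} x_n$. Instead I would apply the contractive inequality once more, this time with $\overline{x} = x_n$ and $\overline{y} = \overline{w}$, to get
\[
d(x_{n+1}, \mathcal{U}\overline{w}) \leq c\bigl(d(x_n, x_{n+1}) + d(\overline{w}, \mathcal{U}\overline{w})\bigr),
\]
combine with the triangle inequality $d(\overline{w}, \mathcal{U}\overline{w}) \leq d(\overline{w}, x_{n+1}) + d(x_{n+1}, \mathcal{U}\overline{w})$, absorb the term $c\, d(\overline{w}, \mathcal{U}\overline{w})$ on the left, and conclude $(1-c)\, d(\overline{w}, \mathcal{U}\overline{w}) \leq d(\overline{w}, x_{n+1}) + c\, d(x_n, x_{n+1})$. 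Letting $n \to \infty$ the right-hand side vanishes, forcing $d(\overline{w}, \mathcal{U}\overline{w}) = 0$, i.e. $\mathcal{U}\overline{w} = \overline{w}$.

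Finally, for uniqueness, if $\overline{w}$ and $\overline{w}'$ are both fixed points then the hypothesis gives
\[
d(\overline{w}, \overline{w}') = d(\mathcal{U}\overline{w}, \mathcal{U}\overline{w}') \leq c\bigl(d(\mathcal{U}\overline{w}, \overline{w}) + d(\mathcal{U}\overline{w}', \overline{w}')\bigr) = c\bigl(d(\overline{w},\overline{w}) + d(\overline{w}',\overline{w}')\bigr) = 0,
\]
whence $\overline{w} = \overline{w}'$. I expect the only real (and still modest) obstacle to be precisely the fixed-point step above: one must resist invoking continuity of $\mathcal{U}$ and instead re-use the Kannan inequality with one argument slot occupied by the candidate limit point, then clear the self-referential $d(\overline{w},\mathcal{U}\overline{w})$ term using $c < 1/2$.
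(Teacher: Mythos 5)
Your proof is correct, and it is worth noting that it follows a genuinely different route from the paper: the paper never proves this statement directly — it is quoted as a known preliminary from \cite{Kan} — and instead recovers a (generalized, infinite-dimensional) Kannan-type result as a special case of its main Theorem \ref{3} on generalized $\mathcal{C}_\eta$ contractions, via Corollary \ref{BKF} with the choice $\gamma=\tfrac12$, $\alpha=\delta=0$, where the constant $c<\tfrac12$ plays the role of a function $\beta\in\mathbb{G}_{1/2}$. The paper's argument is also a Picard-iteration scheme, but it establishes convergence qualitatively: it shows the increments $d(\overline{x}_{n+\eta},\overline{x}_{n+\eta+1})$ decrease, extracts a limit $L$, and rules out $L>0$ (and likewise rules out a non-Cauchy orbit with threshold $\epsilon_0>0$) by the Geraghty-type limiting property of $\beta$, which yields no explicit rate. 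Your argument is more elementary and quantitative: the rearrangement $d(x_n,x_{n+1})\le k\,d(x_{n-1},x_n)$ with $k=c/(1-c)\in(0,1)$ gives geometric decay and a summable telescoping bound, which is exactly what the constant-$c$ hypothesis buys over the paper's more general $\beta$. Both treatments handle the key delicate point identically and correctly: since a Kannan map need not be continuous, the fixed-point property of the limit is obtained by reapplying the contractive inequality with the limit point in one slot and absorbing the $c\,d(\overline{w},\mathcal{U}\overline{w})$ term, rather than by passing to the limit in $x_{n+1}=\mathcal{U}x_n$; your uniqueness step likewise matches the paper's (both reduce to $d(\overline{w},\overline{v})\le 0$ or a strict self-inequality). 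In short: your proof is a complete, self-contained proof of the classical statement, while the paper's approach buys generality (infinite product domain, Geraghty-type $\beta$, and the Fisher case simultaneously) at the cost of an explicit convergence rate.
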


\begin{theorem} (\cite{Fis}) Let $(\overline{X}, d)$ be a complete metric space and $\mathcal{U} : \overline{X} \to \overline{X}$ be a
mapping. If $\mathcal{U}$ satisfies the following inequality:
\begin{align*}
d(\mathcal{U} \overline{x}, \mathcal{U} \overline{y}) \leq c( d(\mathcal{U}\overline{x}, \overline{y})+ d(\mathcal{U}\overline{y},\overline{x}) )
\end{align*}
 where $c~\in(0,1/2)$, then $\mathcal{U}$ has a unique fixed point $\overline{w} \in \overline{X}.$
\end{theorem}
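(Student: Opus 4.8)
The plan is to imitate the Picard iteration scheme from the Banach contraction principle, adapting it to the Fisher-type inequality. First I would fix an arbitrary point $\overline{x}_0 \in \overline{X}$ and form the orbit $\overline{x}_{n+1} = \mathcal{U}\overline{x}_n$. The crucial observation is that applying the hypothesis with $\overline{x} = \overline{x}_n$ and $\overline{y} = \overline{x}_{n-1}$ makes one of the two cross-terms collapse:
\begin{align*}
d(\overline{x}_{n+1}, \overline{x}_n) = d(\mathcal{U}\overline{x}_n, \mathcal{U}\overline{x}_{n-1}) \le c\bigl( d(\mathcal{U}\overline{x}_n, \overline{x}_{n-1}) + d(\mathcal{U}\overline{x}_{n-1}, \overline{x}_n)\bigr) = c\, d(\overline{x}_{n+1}, \overline{x}_{n-1}),
\end{align*}
since $d(\mathcal{U}\overline{x}_{n-1}, \overline{x}_n) = d(\overline{x}_n, \overline{x}_n) = 0$. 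Then the triangle inequality $d(\overline{x}_{n+1}, \overline{x}_{n-1}) \le d(\overline{x}_{n+1}, \overline{x}_n) + d(\overline{x}_n, \overline{x}_{n-1})$ lets me absorb the term $d(\overline{x}_{n+1}, \overline{x}_n)$ on the right, yielding $(1-c)\, d(\overline{x}_{n+1}, \overline{x}_n) \le c\, d(\overline{x}_n, \overline{x}_{n-1})$. Setting $k := c/(1-c)$, the condition $c \in (0, 1/2)$ is exactly what guarantees $k \in (0,1)$, so $d(\overline{x}_{n+1}, \overline{x}_n) \le k^n\, d(\overline{x}_1, \overline{x}_0)$, from which $(\overline{x}_n)$ is Cauchy by the usual geometric-series estimate; completeness then produces a limit $\overline{w} \in \overline{X}$.

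Next I would show that $\overline{w}$ is a fixed point. Since continuity of $\mathcal{U}$ is not assumed, I cannot simply pass the limit through $\mathcal{U}$; instead I apply the hypothesis with $\overline{x} = \overline{x}_n$ and $\overline{y} = \overline{w}$ to obtain
\begin{align*}
d(\overline{x}_{n+1}, \mathcal{U}\overline{w}) \le c\bigl( d(\overline{x}_{n+1}, \overline{w}) + d(\mathcal{U}\overline{w}, \overline{x}_n)\bigr),
\end{align*}
and then let $n \to \infty$. Using $\overline{x}_n \to \overline{w}$ and continuity of the metric, the left side tends to $d(\overline{w}, \mathcal{U}\overline{w})$ and the right side to $c\, d(\mathcal{U}\overline{w}, \overline{w})$, so $(1-c)\, d(\overline{w}, \mathcal{U}\overline{w}) \le 0$ and hence $\mathcal{U}\overline{w} = \overline{w}$. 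For uniqueness, if $\overline{w}$ and $\overline{v}$ are both fixed points, plugging them into the inequality gives $d(\overline{w}, \overline{v}) \le c\bigl(d(\overline{w}, \overline{v}) + d(\overline{v}, \overline{w})\bigr) = 2c\, d(\overline{w}, \overline{v})$, and $2c < 1$ forces $\overline{w} = \overline{v}$.

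I expect the only genuinely delicate point to be the first one: choosing the right substitution into the two-term inequality so that the ``diagonal'' distance $d(\overline{x}_n, \overline{x}_n)$ vanishes, and then correctly reorganising the resulting inequality with the triangle inequality to extract a true contraction factor $c/(1-c)$ for consecutive iterates. Everything after that --- the Cauchy estimate, the limit argument for the fixed point equation, and uniqueness --- is routine and parallels the proof of Kannan's theorem almost verbatim, with the roles of the second arguments of $d(\mathcal{U}\,\cdot\,, \cdot)$ interchanged.
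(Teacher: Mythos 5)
Your argument is correct: the substitution $\overline{x}=\overline{x}_n$, $\overline{y}=\overline{x}_{n-1}$ does kill the term $d(\mathcal{U}\overline{x}_{n-1},\overline{x}_n)$, the triangle inequality then yields $d(\overline{x}_{n+1},\overline{x}_n)\le \frac{c}{1-c}\,d(\overline{x}_n,\overline{x}_{n-1})$ with $\frac{c}{1-c}<1$ precisely because $c<\tfrac12$, and your limit argument for the fixed point equation and the uniqueness step are both sound. This is, however, a genuinely different route from the one taken in the paper: there Fisher's theorem is not proved directly but recovered as a special case of the infinite-dimensional Theorem \ref{3} (via Corollary \ref{BKF}, taking $\alpha=\gamma=0$, $\delta=\tfrac12$ and noting that the constant function $\beta\equiv c<\tfrac12$ lies in $\mathbb{G}_{\frac12}$), and in that proof the Picard differences are only shown to be monotonically decreasing, with the limit forced to zero and the Cauchy property established by a contradiction argument exploiting the Geraghty-type condition on $\beta$ --- no geometric rate appears anywhere. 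Your direct proof buys simplicity and an explicit convergence rate $k^n$ with $k=c/(1-c)$, at the price of working only for a constant coefficient $c$ and a single-variable map; the paper's machinery trades the rate for generality (variable Geraghty coefficient, convex combinations of Banach-, Kannan- and Fisher-type expressions, and maps on infinite products), so the two arguments are complementary rather than redundant.
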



\begin{theorem}(\cite{Kho}) Let $(\overline{X}, d)$ be a complete metric space and $\eta \in \mathbb{N} $. Let $\mathcal{U} : \overline{X}^\eta \to \overline{X}$
defined such that, for all $\overline{w}_1,\overline{w}_2, . . . , \overline{w}_{\eta+1}$, we have the following inequality,
\begin{align*}
d(\mathcal{U} (\overline{w}_1, . . . , \overline{w}_\eta), \mathcal{U} (\overline{w}_2, . . . , \overline{w}_{\eta+1}))&\leq
\mathcal{M}((\overline{w}_1, . . . , \overline{w}_\eta), (\overline{w}_2, . . . , \overline{w}_{\eta+1})))\mathcal{M}((\overline{w}_1, . . . , \overline{w}_\eta), (\overline{w}_2,. . . ,  \overline{w}_{\eta+1}))
\end{align*}
where, $ \beta \in  \mathbb{G}$ and $\mathcal{M} : \overline{X}^\eta \times \overline{X}^\eta \to [0,\infty)$ is defined as
$$\mathcal{M}((\overline{w}_1, . . . , \overline{w}_\eta), (\overline{w}_2,. . . ,  \overline{w}_{\eta+1}))
= max\left\lbrace d(\overline{w}_\eta, \overline{w}_{\eta+1}), d(\overline{w}_\eta, \mathcal{U} (\overline{w}_1, . . . , \overline{w}_\eta)), d( \overline{w}_{\eta+1}, \mathcal{U} (\overline{w}_2,  . . . ,  \overline{w}_{\eta+1}))\right\rbrace.$$

Then there is a point $\overline{w} \in \overline{X}$ such that $\mathcal{U} (\overline{w}, \overline{w}, . . . , \overline{w}) = \overline{w}.$
\end{theorem}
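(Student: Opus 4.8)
\medskip\noindent The plan is to run a sliding-window Picard iteration and then push it through the Geraghty machinery, the one subtlety being that the contractive inequality only compares $\eta$-tuples that overlap in $\eta-1$ coordinates. Fix arbitrary $\overline{w}_1,\dots,\overline{w}_\eta\in\overline{X}$ and put $\overline{w}_{n+\eta}=\mathcal{U}(\overline{w}_n,\dots,\overline{w}_{n+\eta-1})$ for $n\ge1$; write $a_n=d(\overline{w}_n,\overline{w}_{n+1})$. Feeding the two consecutive windows $(\overline{w}_n,\dots,\overline{w}_{n+\eta-1})$ and $(\overline{w}_{n+1},\dots,\overline{w}_{n+\eta})$ into the hypothesis makes $\mathcal{M}$ collapse to $\max\{a_{n+\eta-1},a_{n+\eta}\}$, so that
\[
a_{n+\eta}\le\beta\bigl(\max\{a_{n+\eta-1},a_{n+\eta}\}\bigr)\,\max\{a_{n+\eta-1},a_{n+\eta}\}.
\]
Since $\beta$ takes values in $[0,1)$, the maximum cannot be $a_{n+\eta}$ unless $a_{n+\eta}=0$, so $(a_n)$ is eventually non-increasing and converges to some $r\ge0$; if $r>0$ then $a_{n+\eta}/a_{n+\eta-1}\le\beta(a_{n+\eta-1})<1$ while the left-hand side tends to $1$, so $\beta(a_{n+\eta-1})\to1$ and the defining property of $\beta\in\mathbb{G}$ forces $r=0$, a contradiction. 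Hence $a_n\to0$.

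The core of the proof is to show that $(\overline{w}_n)$ is Cauchy. Assume it is not: choose $\varepsilon>0$ and indices $n_k<m_k$, with $m_k$ minimal, such that $d(\overline{w}_{n_k},\overline{w}_{m_k})\ge\varepsilon$; minimality together with $a_n\to0$ yields $d(\overline{w}_{n_k},\overline{w}_{m_k})\to\varepsilon$, and hence $d(\overline{w}_{n_k+p},\overline{w}_{m_k+p})\to\varepsilon$ for every fixed $p\ge0$. Because the inequality links only overlapping windows, I would bridge $W_0=(\overline{w}_{n_k},\dots,\overline{w}_{n_k+\eta-1})$ to $W_\eta=(\overline{w}_{m_k},\dots,\overline{w}_{m_k+\eta-1})$ through the chain
\[
W_j=(\overline{w}_{n_k+j},\dots,\overline{w}_{n_k+\eta-1},\ \overline{w}_{m_k},\dots,\overline{w}_{m_k+j-1}),\qquad 0\le j\le\eta,
\]
in which consecutive windows overlap in $\eta-1$ coordinates and $\mathcal{U}(W_0)=\overline{w}_{n_k+\eta}$, $\mathcal{U}(W_\eta)=\overline{w}_{m_k+\eta}$; this gives $d(\overline{w}_{n_k+\eta},\overline{w}_{m_k+\eta})\le\sum_{j=0}^{\eta-1}\beta(\mathcal{M}_j)\mathcal{M}_j$ with $\mathcal{M}_j=\mathcal{M}(W_j,W_{j+1})$. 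A crude estimate here would only give $\varepsilon\le\eta(\sup\beta)\varepsilon$, which is useless; the point is that the $\eta-1$ interior links are asymptotically negligible. Indeed, for $1\le j\le\eta-1$ the last coordinate of $W_j$ is $\overline{w}_{m_k+j-1}$, which is $o(1)$-close to $\overline{w}_{m_k}$, so $\mathcal{M}_j$ is within $o(1)$ of $\max\{d(\overline{w}_{m_k},\mathcal{U}(W_j)),d(\overline{w}_{m_k},\mathcal{U}(W_{j+1}))\}$; starting from $d(\overline{w}_{m_k},\mathcal{U}(W_\eta))=d(\overline{w}_{m_k},\overline{w}_{m_k+\eta})\to0$, a backward induction on $j$ shows $d(\overline{w}_{m_k},\mathcal{U}(W_j))\to0$ for all $j\ge1$: if at some step it failed, $d(\overline{w}_{m_k},\mathcal{U}(W_j))$ would be the dominant term of $\mathcal{M}_j$, the triangle bound $d(\overline{w}_{m_k},\mathcal{U}(W_j))\le o(1)+\beta(\mathcal{M}_j)\mathcal{M}_j$ would give $(1-\beta(\mathcal{M}_j))\mathcal{M}_j\to0$ with $\mathcal{M}_j$ bounded away from $0$, hence $\beta(\mathcal{M}_j)\to1$, and the property of $\beta\in\mathbb{G}$ would force $\mathcal{M}_j\to0$, a contradiction. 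Therefore $\mathcal{M}_j\to0$ for $1\le j\le\eta-1$ while $\mathcal{M}_0\to\varepsilon$, and the bridge estimate collapses to $\varepsilon\le\varepsilon\,\limsup_k\beta(\mathcal{M}_0)$; thus $\beta(\mathcal{M}_0)\to1$ along a subsequence, so the Geraghty property gives $\mathcal{M}_0\to0$ there, contradicting $\mathcal{M}_0\to\varepsilon>0$. This backward-induction bookkeeping, needed to kill the spurious factor $\eta$ produced by the bridge, is the step I expect to be the main obstacle.

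Granting that $(\overline{w}_n)$ is Cauchy, completeness provides $\overline{w}\in\overline{X}$ with $\overline{w}_n\to\overline{w}$. To conclude I would set up the analogous bridge from $V_0=(\overline{w}_n,\dots,\overline{w}_{n+\eta-1})$ to $V_\eta=(\overline{w},\dots,\overline{w})$, namely $V_j=(\overline{w}_{n+j},\dots,\overline{w}_{n+\eta-1},\overline{w},\dots,\overline{w})$ with $j$ trailing copies of $\overline{w}$, so that $\mathcal{U}(V_0)=\overline{w}_{n+\eta}\to\overline{w}$ and $\mathcal{U}(V_\eta)=\mathcal{U}(\overline{w},\dots,\overline{w})$. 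Using $\overline{w}_n\to\overline{w}$, $a_n\to0$ and the same Geraghty self-improvement (now as a forward induction on $j$), each $\mathcal{M}(V_j,V_{j+1})\to0$, whence $d(\overline{w}_{n+\eta},\mathcal{U}(\overline{w},\dots,\overline{w}))\to0$; since the second argument does not depend on $n$ and $\overline{w}_{n+\eta}\to\overline{w}$, we obtain $\mathcal{U}(\overline{w},\dots,\overline{w})=\overline{w}$, which is the assertion. Note that no continuity of $\mathcal{U}$ is used: along each bridge it is replaced by the Geraghty property of $\beta$.
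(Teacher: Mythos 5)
Your proposal is correct, but note that the paper never actually proves this statement: it is quoted from \cite{Kho} as a preliminary, so the natural comparison is with the proof the paper does write out for its main result, Theorem~\ref{3}. The skeleton is the same in both (a Picard-type iteration, step distances driven to $0$ by the Geraghty property, Cauchyness by the minimal-index $\epsilon_0$ contradiction, then identification of the limit), but you diverge exactly where the hypotheses diverge. In Theorem~\ref{3} the contractive inequality is postulated for two \emph{arbitrary} tuples, so the paper can apply it directly to the two far-apart windows in the Cauchy step and to a window versus the constant tuple $(\overline{w},\overline{w},\dots)$ at the end; the statement you were given assumes the inequality only for windows that are shifts of one another, and your chain of overlapping windows $W_0,\dots,W_\eta$ (respectively $V_0,\dots,V_\eta$ at the fixed-point stage), together with the backward/forward Geraghty self-improvement that makes the $\eta-1$ interior links asymptotically negligible and thus kills the spurious factor $\eta$, is precisely the extra device needed to compare distant windows under this weaker hypothesis; since $\mathcal{M}$ only involves the last coordinates and the two images, the bookkeeping you sketch does go through. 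What your route buys is a proof under a strictly weaker quantification than the one the paper's own technique relies on, at the cost of that extra induction. Two minor points: at the last link, asserting $\mathcal{M}(V_{\eta-1},V_\eta)\to 0$ presupposes the conclusion, because the constant $d(\overline{w},\mathcal{U}(\overline{w},\dots,\overline{w}))$ sits inside that maximum—the clean finish, which your self-improvement mechanism does supply, is that a positive value of this constant would make $(1-\beta)$ times a fixed positive number sit below quantities tending to $0$, a contradiction needing only $\beta<1$; and you tacitly (and correctly) restore the factor $\beta$ that is missing from the displayed inequality in the statement.
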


We denote by $\mathbb{N}$ (resp. $\mathbb{N}_0$) the set of positive (nonnegative) integers.

The aim of this paper is to generalize and extend Theorem \cite{Fis} and Theorem \cite{Kan}. We denote infinite tuples of points $(\overline{x}_1, \overline{x}_2, ...)$ by $(\overline{x}_\eta)_{\i=1} ^\infty$, and the infinite tuples $(\overline{x}_1, \overline{x}_2, ...\overline{x}_{\eta-1}, \overline{x}_{\eta},\overline{x}_{\eta},...)$ with the $\eta-$th point repeated by $(\overline{x}_{\i,\widehat{\eta}})_{\i=1} ^\infty.$

Then, in our paper, we prove an infinite dimensional proper generalizations of both Kannan's Theorem \cite{Kan}, Fisher's Theorem \cite{Fis} as a corollaries of more general theorem for a family of operators each satisfying a unique family of contractions by finding its fixed points inspired by the concept of M. Al Shumrani et al. \cite{Kho}. We also prove that, there exists a homotopy between maps satisfying Kannan's contraction \cite{Kan} and Fisher's contractions \cite{Fis} respectively.

An application of our main result is also presented in the last part of the paper, to prove the existence and the uniqueness of a solution of a Volterra integral type equation.

\section{Main Results}

\quad In this section, we introduce some new different classes of generalized contraction mappings. Also, we discuss about some sufficient conditions for the existence and uniqueness of fixed points for these new classes of mappings.	Then, we generalize the notions of Gergathy function and Kannan function, which would be key tools in proving our new theorems. Moreover, to strengthen this new notions we provide some illustrative examples.

Throughout our paper, (unless otherwise stated) we assume that $\left(X,d\right)$ is a $complete$ $metric$ $space$ and $\eta\in\mathbb{N}$. Further let us introduce some new notions and we give some connected examples to validate our notions.

\begin{definition}[\textbf{Generalized Geraghty function}] A function $\beta:[0,\infty)\to[0,r)$, $r<1$ is said to be a generalized Geragthy function if and only if $\lim\limits_{n\to\infty}\beta(t_n)=r \mbox{ implies } \lim\limits_{n\to\infty}t_n=0$. Let us define the following set $\mathbb{G}_{r}:=\{\beta| \beta \text{ is a generalized Geragthy function for }r\}.$
\end{definition}

\begin{definition}[\textbf{Kannan's $\eta$ function}] For any $\mathcal{U}:\prod\limits_{l=1}^{\eta}\overline{X} \rightarrow \overline{X}$ we define the function $K_{\mathcal{U}_\eta}:\prod\limits_{l=1}^{\eta} \overline{X} \times \prod\limits_{l=1}^{\eta} \overline{X}\rightarrow [0,\infty)$, as follows
		\begin{align*}
		 K_{\mathcal{U}_\eta}((\overline{w}_{\i})_{\i=1}^{\eta},(\overline{v}_{\i})_{\i=1}^{\eta})=~d(\mathcal{U}((\overline{w}_{\i})_{\i=1}^{\eta}),\overline{w}_\eta)+d(\mathcal{U}((\overline{v}_{\i})_{\i=1}^{\eta}),\overline{v}_\eta)
		\end{align*}
	for all $(\overline{w}_{\i})_{\i=1}^{\eta},(\overline{v}_{\i})_{\i=1}^{\eta}\in \prod\limits_{\i=1}^{\eta}X$.
	\end{definition}

\begin{definition}[\textbf{extended Kannan's function}] For any $\mathcal{U}:\prod\limits_{l=1}^{\infty}\overline{X} \rightarrow \overline{X},$ we define the extended function $K^{*}_{\mathcal{U}_\eta}:\prod\limits_{l=1}^\infty \overline{X} \times \prod\limits_{l=1}^\infty \overline{X}\rightarrow [0,\infty)$, as follows
		\begin{align*}
		K^{*}_{\mathcal{U}_\eta}\left((\overline{w}_{\i,})_{\i=1}^{\infty},(\overline{v}_{\i})_{\i=1}^{\infty}\right)  &\hspace{5pt} =~(d(\mathcal{U}((\overline{w}_{\i})_{\i=1}^{\infty}),\overline{w}_\eta)+d(\mathcal{U}((\overline{v}_{\i})_{\i=1}^{\infty}),\overline{v}_\eta))
	\end{align*}
\end{definition}

\begin{definition}[\textbf{Fisher's function}] For any $\mathcal{U}:\prod\limits_{l=1}^{\eta}\overline{X} \rightarrow \overline{X},$ we define the function $F^{*}_{\eta}:\prod\limits_{l=1}^{\eta} \overline{X} \times \prod\limits_{l=1}^{\eta} \overline{X}\rightarrow [0,\infty)$,  as follows 	
		\begin{align*}
		F^{*}_{\eta}\left((\overline{w}_{\i})_{\i=1}^{\eta},(\overline{v}_{\i})_{\i=1}^{\eta}\right)  &\hspace{5pt} =~(d(\mathcal{U}((\overline{w}_{\i})_{\i=1}^{\eta}),\overline{v}_{\eta})+d(\mathcal{U}((\overline{v}_{\i,})_{\i=1}^{\eta}),\overline{w}_\eta))
	\end{align*}
\end{definition}

\begin{definition}[\textbf{extended Fisher's function}] For any $\mathcal{U}:\prod\limits_{l=1}^{\infty} \rightarrow \overline{X},$ we define the extended function $F_{\eta}:\prod\limits_{l=1}^\infty \overline{X}\times \prod\limits_{l=1}^\infty \overline{X}\rightarrow [0,\infty)$, as follows 	
		\begin{align*}
		F_{\eta}\left((\overline{w}_{\i})_{\i=1}^{\infty},(\overline{v}_{\i})_{\i=1}^{\infty}\right)  &\hspace{5pt} =~(d(\mathcal{U}((\overline{w}_{\i})_{\i=1}^{\infty}),\overline{v}_\eta)+d(\mathcal{U}((\overline{v}_{\i})_{\i=1}^{\infty}),\overline{w}_\eta))
	\end{align*}
\end{definition}

\begin{definition}[\textbf{Kannan's generalized contraction for finite dimensions}]
A function $\mathcal{U}:\prod\limits_{l=1}^{\eta}\overline{X} \rightarrow \overline{X}$, is called Kannan's generalized contraction for finite dimensions if and only if, 	
\begin{align*}
  d(\mathcal{U}((\overline{w}_{\i})_{\i=1}^{\eta}),\mathcal{U}((\overline{v}_{i})_{i=1}^{\eta}))\leq~cK^{*}_{\eta}((\overline{w}_{\i})_{\i=1}^{\eta},(\overline{v}_{\i})_{\i=1}^{\eta})),
\end{align*}
for all $\overline{w}_1,. . .,\overline{w}_{\eta+1},\overline{v}_1,. . .,\overline{v}_{\eta+1}\in X$ and some fixed $c\in(0,1/2)$.
	\end{definition}

\begin{example}\label{1}
Let $\overline{X}=[0,1]$. We define $\mathcal{U}:\prod\limits_{\i=1}^{\eta}\overline{X}\rightarrow \overline{X} $ by $$\mathcal{U}(({\overline{x}_{\i}})_{\i=1}^{\eta})=\begin{cases}
 \frac{\overline{x}_\eta}{8^\eta} , & ~\text{if } \overline{x}_\eta\in [0,1)\\
\frac{1}{16^\eta}, & ~\text{if } \overline{x}_\eta=1\\
\end{cases}$$
Then $\mathcal{U}$ is a Kannan's generalized contraction for finite dimensions for suitable $\beta$.
\end{example}

\begin{definition}[\textbf{Kannan's generalized contraction for infinite dimensions}] A function $\mathcal{U}:\prod\limits_{l=1}^{\infty}\overline{X} \rightarrow \overline{X}$ is called Kannan's generalized contraction for infinite dimensions, if and only if 	
\begin{align*}
  d(\mathcal{U}((\overline{w}_{\i,\widehat{\eta}})_{\i=1}^{\infty}),\mathcal{U}((\overline{v}_{\i,\widehat{\eta}})_{\i=1}^{\infty}))\leq~cK_{\eta}((\overline{w}_{\i,\widehat{\eta}})_{\i=1}^{\infty},(\overline{v}_{\i,\widehat{\eta}})_{\i=1}^{\infty})),
\end{align*}
for all $\overline{w}_1,. . .,\overline{w}_{\eta+1},\overline{v}_1,. . .,\overline{v}_{\eta+1}\in \overline{X}$ and some fixed $c\in(0,1/2)$.
\end{definition}

\begin{example}
In the Example \ref{1}, repeating the $\eta$-th term infinitely, we get Kannan's generalized contraction for infinite dimensions.
\end{example}

\begin{definition}[\textbf{Fisher's generalized contraction for finite dimensions}] A function $\mathcal{U}:\prod\limits_{l=1}^{\eta} \overline{X}\rightarrow \overline{X}$ is called a Fisher's generalized contraction for finite dimensions, if and only if
\begin{align*}
  d(\mathcal{U}((\overline{w}_{\i})_{\i=1}^{\eta}),\mathcal{U}((\overline{v}_{\i})_{\i=1}^{\eta}))\leq~cF^{*}_{\eta}((\overline{w}_{\i})_{\i=1}^{\eta},(\overline{v}_{\i})_{\i=1}^{\eta})),
\end{align*}
for all $\overline{w}_1,. . .,\overline{w}_{\eta+1},\overline{v}_1,. . .,\overline{v}_{\eta+1}\in \overline{X}$ and some fixed $c\in(0,1/2)$.
\end{definition}

\begin{example}\label{2}
Let $\overline{X}=[0,1]$ and we define $\mathcal{U}:\prod\limits_{\i=1}^{\eta}\overline{X}\rightarrow \overline{X} $ by $\mathcal{U}(({\overline{x}_{\i}})_{\i=1}^{\eta})= \alpha \overline{x}_\eta$ for some $\alpha \in (0,\frac{1}{2})$. Then $\mathcal{U}$ is a Fisher's generalized contraction for finite dimensions.
\end{example}

\begin{definition}[\textbf{Fisher's generalized contraction for infinite dimensions}] A function $\mathcal{U}:\prod\limits_{l=1}^{\infty}\overline{X} \rightarrow \overline{X}$ is called a Fisher's generalized contraction for infinite dimensions if and only if
\begin{align*}
  d(\mathcal{U}((\overline{w}_{\i,\widehat{\eta}})_{\i=1}^{\infty}),\mathcal{U}((\overline{v}_{\i,\widehat{\eta}})_{\i=1}^{\infty}))\leq~cF_{\eta}((\overline{w}_{\i,\widehat{\eta}})_{\i=1}^{\infty},(\overline{v}_{\i,\widehat{\eta}})_{\i=1}^{\infty})),
\end{align*}
for all $\overline{w}_1,. . .,\overline{w}_{\eta+1},\overline{v}_1,. . .,\overline{v}_{\eta+1}\in \overline{X}$ and some fixed $c\in(0,1/2)$.
\end{definition}

\begin{example}
In the Example \ref{2}, repeating the $\eta$-th term infinitely, we get Fisher's generalized contraction for infinite dimensions.
\end{example}

Consider the collection $\mathcal{F}$ of functions  $f_{\i}:\mathbb{R}_{+}^3\rightarrow \mathbb{R}_{+}; \i=1,2,3$ defined by:
\begin{itemize}
\item[$(\mathcal{F}_1)$] $f_1(\overline{x}_1,\overline{x}_2,\overline{x}_3)=c_1 \overline{x}_1+2c_2 \overline{x}_2 +2c_3 \overline{x}_3;$ where $c_1+2c_2+2c_3=1.$
\item[$(\mathcal{F}_2)$] $f_2(\overline{x}_1,\overline{x}_2,\overline{x}_3)= \max \{\overline{x}_1,\overline{x}_2,\overline{x}_3\}$.
\item[$(\mathcal{F}_3)$] $f_3(\overline{x}_1,\overline{x}_2,\overline{x}_3)=\min \{\overline{x}_1,\overline{x}_2,\overline{x}_3\}$.
\end{itemize}

\begin{definition}[\textbf{Generalized $ L_{\mathcal{U}_\eta}$ function}]\label{LC} For any $(\overline{w}_{i})_{i=1}^{\infty}, (\overline{v}_{\i})_{\i=1}^{\infty}~\in \prod\limits_{l=1}^{\infty} \overline{X}$, any operator, $\mathcal{U}:\prod\limits_{l=1}^{\infty} \overline{X}\rightarrow \overline{X}, L_{T_\eta}:[0,1]\times\prod\limits_{\i=1}^{\infty}\overline{X} \times \prod\limits_{\i=1}^{\infty}\overline{X}\rightarrow \overline{X} $ is defined as:
\begin{align*}
		L _{\mathcal{U}_\eta}\left(\alpha, (\overline{w}_{\i})_{\i=1}^{\infty},(\overline{v}_{\i})_{\i=1}^{\infty}\right)  &\hspace{5pt} =~f(B_{\eta}((\overline{w}_{\i})_{\i=1}^{\infty},(\overline{v}_{\i})_{\i=1}^{\infty}), K_{\eta}((\overline{w}_{\i})_{\i=1}^{\infty},(\overline{v}_{\i})_{\i=1}^{\infty}),F_{\eta}((\overline{w}_{\i})_{\i=1}^{\infty},(\overline{v}_{\i})_{\i=1}^{\infty}))
	\end{align*}
where $f\in \mathcal{F} $.
\end{definition}

\begin{definition}[\textbf{Generalized $L^{*}_{\mathcal{U}_\eta}$ function}] For any $(\overline{w}_{\i})_{\i=1}^{\eta}, (\overline{v}_{\i})_{\i=1}^{\eta}~\in \prod\limits_{l=1}^{\eta} \overline{X},$ and any operator, $\mathcal{U}:\prod\limits_{l=1}^{\eta} \overline{X}\rightarrow \overline{X}$, $L'^{*}_{T_\eta}:[0,1]\times\prod\limits_{\i=1}^{\eta}\overline{X} \times \prod\limits_{\i=1}^{\eta}\overline{X}\rightarrow [0,\infty) $ is defined as:
\begin{align*}
		L'^{*}_{\mathcal{U}_\eta}\left((\overline{w}_{\i})_{\i=1}^{\eta},(\overline{v}_{\i})_{\i=1}^{\eta}\right)  &\hspace{5pt} =~f(B^{*}_{\mathcal{U}_\eta}((\overline{w}_{\i})_{\i=1}^{\eta},(\overline{v}_{\i})_{\i=1}^{\eta}),K^{*}_{\mathcal{U}_\eta}((\overline{w}_{\i})_{\i=1}^{\eta},(\overline{v}_{\i})_{\i=1}^{\eta}), F^{*}_{\mathcal{U}_\eta}((\overline{w}_{\i})_{\i=1}^{\eta},(\overline{v}_{\i})_{\i=1}^{\eta}))),
 	\end{align*}
where $f\in \mathcal{F} $.
\end{definition}

\begin{definition}[\textbf{Generalized $\mathcal{M}'^{*}_{\mathcal{U}_\eta}$ function}] For any $(\overline{w}_{\i})_{\i=1}^{\eta}, (\overline{v}_{\i})_{\i=1}^{\eta}~\in \prod\limits_{l=1}^{\eta} \overline{X},$ and any operator, $\mathcal{U}:\prod\limits_{l=1}^{\eta} \overline{X}\rightarrow \overline{X}$, $\mathcal{M}'^{*}_{\mathcal{U}_\eta}:[0,1]\times\prod\limits_{i=1}^{\infty}\overline{X} \times \prod\limits_{\i=1}^{\infty}\overline{X}\rightarrow [0,\infty) $ is defined as:
\begin{align*}
		\mathcal{M}'^{*}_{\mathcal{U}_\eta}\left((\overline{w}_{\i})_{\i=1}^{\eta},(\overline{v}_{\i})_{\i=1}^{\eta}\right)  &\hspace{5pt} =~(\alpha B^{*}_{\mathcal{U}_\eta}((\overline{w}_{\i})_{\i=1}^{\eta},(\overline{v}_{\i})_{\i=1}^{\eta})+2\gamma K^{*}_{\mathcal{U}_\eta}((\overline{w}_{\i})_{\i=1}^{\eta},(\overline{v}_{\i})_{\i=1}^{\eta})+2\delta F^{*}_{\mathcal{U}_\eta}((\overline{w}_{\i})_{\i=1}^{\eta},(\overline{v}_{\i})_{\i=1}^{\eta})),
 	\end{align*}
where  $\alpha +2\gamma+ 2\delta=1$.
\end{definition}

\begin{definition}[\textbf{Generalized $\mathcal{M}_{\mathcal{U}_\eta}$ function}] For any $(\overline{w}_{\i})_{\i=1}^{\infty}, (\overline{v}_{\i})_{\i=1}^{\infty}~\in \prod\limits_{l=1}^{\infty} \overline{X}$, any operator, $\mathcal{U}:\prod\limits_{l=1}^{\infty} \overline{X}\rightarrow \overline{X}, M_{\mathcal{U}_\eta}:[0,1]\times\prod\limits_{\i=1}^{\infty}\overline{X} \times \prod\limits_{\i=1}^{\infty}\overline{X}\rightarrow \overline{X} $ is defined as:
\begin{align*}
		\mathcal{M}_{\mathcal{U}_\eta}\left(\alpha, (\overline{w}_{\i})_{\i=1}^{\infty},(\overline{v}_{\i})_{\i=1}^{\infty}\right)  &\hspace{5pt} =~\alpha K_{\eta}((\overline{w}_{\i})_{\i=1}^{\infty},(\overline{v}_{\i})_{\i=1}^{\infty})+(1-\alpha)F_{\eta}((\overline{w}_{\i})_{\i=1}^{\infty},(\overline{v}_{\i})_{\i=1}^{\infty}).
	\end{align*}
\end{definition}

\begin{definition}[\textbf{Generalized $\mathcal{M}'^{*}_{\mathcal{U}_\eta}$ function}] For any $(\overline{w}_{\i})_{\i=1}^{\eta}, (\overline{v}_{\i})_{\i=1}^{\eta}~\in \prod\limits_{l=1}^{\eta} \overline{X},$ and any operator $\mathcal{U}:\prod\limits_{l=1}^{\eta} \overline{X}\rightarrow \overline{X}$, $\mathcal{M}'^{*}_{\mathcal{U}_\eta}:[0,1]\times\prod\limits_{\i=1}^{\eta}\overline{X} \times \prod\limits_{\i=1}^{\eta}\overline{X}\rightarrow [0,\infty) $ is defined as:
\begin{align*}
		\mathcal{M}'^{*}_{\mathcal{U}_\eta}\left((\overline{w}_{\i})_{\i=1}^{\eta},(\overline{v}_{\i})_{\i=1}^{\eta}\right)  &\hspace{5pt} =~(\alpha B^{*}_{\mathcal{U}_\eta}((\overline{w}_{\i})_{\i=1}^{\eta},(\overline{v}_{\i})_{\i=1}^{\eta})+2\gamma K^{*}_{\mathcal{U}_\eta}((\overline{w}_{\i})_{\i=1}^{\eta},(\overline{v}_{\i})_{\i=1}^{\eta})+2\delta F^{*}_{\mathcal{U}_\eta}((\overline{w}_{\i})_{\i=1}^{\eta},(\overline{v}_{\i})_{\i=1}^{\eta})),
 	\end{align*}
where  $\alpha +2\gamma+ 2\delta=1$.
\end{definition}

\begin{definition}[\textbf{Generalized $\mathcal{M}'_{\mathcal{U}_\eta}$ function}] For any $(\overline{w}_{\i})_{\i=1}^{\infty}, (\overline{v}_{\i})_{\i=1}^{\infty}~\in \prod\limits_{l=1}^{\infty} \overline{X}$ , any operator $\mathcal{U}:\prod\limits_{l=1}^{\infty} \overline{X}\rightarrow \overline{X}, \mathcal{M}'_{\mathcal{U}_\eta}:[0,1]\times\prod\limits_{\i=1}^{\infty}\overline{X} \times \prod\limits_{\i=1}^{\infty}\overline{X}\rightarrow \overline{X} $ is defined as:
\begin{align*}
		\mathcal{M}'_{\mathcal{U}_\eta}\left((\overline{w}_{\i})_{\i=1}^{\infty},(\overline{v}_{\i})_{\i=1}^{\infty}\right)  &\hspace{5pt} =~(\alpha B_\eta((\overline{w}_{\i})_{\i=1}^{\infty},(\overline{v}_{\i})_{\i=1}^{\infty})+2\gamma K_{\eta}((\overline{w}_{\i})_{\i=1}^{\infty},(\overline{v}_{\i})_{\i=1}^{\infty})+2\delta F_{\eta}((\overline{w}_{\i})_{\i=1}^{\infty},(\overline{v}_{\i})_{\i=1}^{\infty})),
	\end{align*}	
where  $\alpha +2\gamma+ 2\delta=1.$	
\end{definition}

\begin{definition}\label{hc2}[\textbf{$L^{\alpha^{*}}_\eta$ contraction}] An operator $\mathcal{U}:\prod\limits_{l=1}^{\eta} \overline{X}\rightarrow \overline{X}$ is called a $L^{\alpha^{*}}_{\eta}$ $contraction$ if and only if it satisfies the following inequality,
		\begin{equation}
		 d(\mathcal{U}((\overline{w}_{\i})_{\i=1}^{\eta}),\mathcal{U}((\overline{v}_{\i})_{\i=1}^{\eta})\leq~\beta(L^{*}_{\eta}(\alpha, (w_{\i})_{\i=1}^{\eta},(\overline{v}_{\i})_{\i=1}^{\eta}))L^{*}_{\eta}(\alpha, (\overline{w}_{\i})_{\i=1}^{\eta},(\overline{v}_{\i})_{\i=1}^{\eta})
		\end{equation}
for all $\overline{w}_1, \overline{w}_2, \ldots, \overline{w}_{\eta}, \overline{v}_1, \overline{v}_2, \ldots, \overline{v}_\eta \in \overline{X}$, for a particular $\beta\in\mathbb{G}_{\frac{1}{2}}$.
\end{definition}

\begin{definition}[\textbf{Generalized $L^{\alpha}_{\eta}$ contraction}] An operator $\mathcal{U}:\prod\limits_{l=1}^{\infty} \overline{X}\rightarrow \overline{X}$ is called a $L^{\alpha}_{\eta}$ $contraction$ if and only if it satisfies the following inequality,
		\begin{equation}\label{hc3}
		 d(\mathcal{U}((\overline{w}_{\i,\widehat{\eta}})_{\i=1}^{\infty}),\mathcal{U}((\overline{v}_{\i,\widehat{\eta}})_{\i=1}^{\infty})\leq~\beta(L_{\mathcal{U}_\eta}(\alpha, (\overline{w}_{\i,\widehat{\eta}})_{\i=1}^{\infty},(\overline{v}_{\i,\widehat{\eta}})_{\i=1}^{\infty}))L_{\mathcal{U}_\eta}(\alpha, (\overline{w}_{\i,\widehat{\eta}})_{\i=1}^{\infty},(\overline{v}_{\i,\widehat{\eta}})_{\i=1}^{\infty})
		\end{equation}
	for all	$\overline{w}_1, \overline{w}_2, \ldots, \overline{w}_{\eta}, \overline{v}_1, \overline{v}_2, \ldots, \overline{v}_\eta \in \overline{X}$, for a particular $\beta\in\mathbb{G}_{\frac{1}{2}}$.
\end{definition}

\begin{definition}[\textbf{$H^{\alpha^{*}}_\eta$ contraction):}] An operator $\mathcal{U}:\prod\limits_{l=1}^{\eta} \overline{X}\rightarrow \overline{X}$ is called a $H^{\alpha^{*}}_{\eta}$ $contraction$ if and only if it satisfies the following inequality,
		\label{hc}
		\begin{equation}
		 d(\mathcal{U}((\overline{w}_{\i})_{\i=1}^{\eta}),\mathcal{U}((\overline{v}_{\i})_{\i=1}^{\eta})\leq~\beta(\mathcal{M}^{*}_{\eta}(\alpha, (\overline{w}_{\i})_{\i=1}^{\eta},(\overline{v}_{\i})_{\i=1}^{\eta}))\mathcal{M}^{*}_{\eta}(\alpha, (\overline{w}_{\i})_{i=1}^{\eta},(\overline{v}_{\i})_{\i=1}^{\eta})
		\end{equation}
		for all $ \overline{w}_1, \overline{w}_2, \ldots, \overline{w}_{\eta}, \overline{v}_1, \overline{v}_2, \ldots, \overline{v}_\eta \in \overline{X}$, for a particular $\beta\in\mathbb{G}_{\frac{1}{2}}$.
\end{definition}

\begin{example}
Let $\overline{X}=[0,1]$ and  define $\mathcal{U}:\prod\limits_{\i=1}^{\eta}\overline{X}\rightarrow \overline{X} $ by $\mathcal{U}(({\overline{x}_{\i}})_{\i=1}^{\eta})= \alpha \overline{x}_\eta$ for some $\alpha \in (0,\frac{1}{2})$. Then $\mathcal{U}$ is a $H_K^{\alpha}$ contraction.
\end{example}

\begin{definition}[\textbf{Generalized $H^{\alpha}_\eta$ contraction}] An operator $\mathcal{U}:\prod\limits_{l=1}^{\infty} \overline{X}\rightarrow \overline{X}$ is called a generalized $H^{\alpha}_{\eta}$ $contraction$ if and only if it satisfies the following inequality,
		\begin{equation}\label{hc1}
		 d(\mathcal{U}((\overline{w}_{\i,\widehat{\eta}})_{\i=1}^{\infty}),\mathcal{U}((\overline{v}_{\i,\widehat{\eta}})_{\i=1}^{\infty})\leq~\beta(\mathcal{M}_{\mathcal{U}_\eta}(\alpha, (\overline{w}_{\i,\widehat{\eta}})_{\i=1}^{\infty},(\overline{v}_{\i,\widehat{\eta}})_{\i=1}^{\infty}))\mathcal{M}_{\mathcal{U}_\eta}(\alpha, (\overline{w}_{\i,\widehat{\eta}})_{\i=1}^{\infty},(\overline{v}_{\i,\widehat{\eta}})_{\i=1}^{\infty})
		\end{equation}
for all	$ \overline{w}_1, \overline{w}_2, \ldots, \overline{w}_{\eta}, \overline{v}_1, \overline{v}_2, \ldots, \overline{v}_\eta \in \overline{X}$, for a particular $\beta\in\mathbb{G}_{\frac{1}{2}}$.
\end{definition}

\begin{example}
Let $\overline{X}=[0,1]$ and  define $\mathcal{U}:\prod\limits_{\i=1}^{\infty}\overline{X}\rightarrow \overline{X} $ by $$\mathcal{U}(({\overline{x}_{\i}})_{\i=1}^{\infty})=\begin{cases}
 \frac{\overline{x}_\eta}{20} , & ~\text{if }\overline{ x}_\eta\in [0,1)\\
\frac{1}{18}  , & ~\text{if } \overline{x}_\eta=1.
\end{cases}$$  Take, $\beta=\frac{1}{4}$ and $\alpha=\frac{1}{4}.$ Then $\mathcal{U}$ is a generalized $H_K^{\alpha}$ contraction.
\end{example}

\begin{definition}\label{hc}[\textbf{$\mathcal{C}_\eta$ contraction}] An operator $\mathcal{U}:\prod\limits_{l=1}^{\eta} \overline{X}\rightarrow \overline{X}$ is called a $\mathcal{C}_{\eta}$ $contraction$ if and only if it satisfies the following inequality,
		\begin{equation}	d(\mathcal{U}((\overline{w}_{\i})_{\i=1}^{\eta}),\mathcal{U}((\overline{v}_{\i})_{\i=1}^{\eta})\leq~\beta(\mathcal{M}'^{*}_{\eta}((\overline{w}_{\i})_{\i=1}^{\eta},(\overline{v}_{\i})_{\i=1}^{\eta}))\mathcal{M}'^{*}_{\eta}((\overline{w}_{\i})_{\i=1}^{\eta},(\overline{v}_{\i})_{\i=1}^{\eta})
		\end{equation}
for all $\overline{w}_1, \overline{w}_2, \ldots, \overline{w}_{\eta}, \overline{v}_1, \overline{v}_2, \ldots, \overline{v}_\eta \in \overline{X}$, for a particular $\beta\in\mathbb{G}_{\frac{1}{2}}$.
		\end{definition}

\begin{example}\label{6}
Let $\overline{X}=[0,1]$ and we define $\mathcal{U}:\prod\limits_{\i=1}^{\infty}\overline{X}\rightarrow \overline{X} $ by $\mathcal{U}((\overline{x}_{\i})_{\i=1}^{\eta})= \alpha \overline{x}_\eta$ for some $\alpha \in (0,\frac{1}{2})$. Then $\mathcal{U}$ is a $\mathcal{C}_\eta$ contraction.
\end{example}

\begin{definition}[\textbf{Generalized $\mathcal{C}_\eta$ contraction}] An operator $\mathcal{U}:\prod\limits_{l=1}^{\infty} \overline{X}\rightarrow \overline{X}$ is called a generalized $\mathcal{C}_{\eta}$ $contraction$ if and only if it satisfies the following inequality,
		\begin{equation}\label{hc1}		 d(\mathcal{U}((\overline{w}_{\i,\widehat{\eta}})_{\i=1}^{\infty}),\mathcal{U}((\overline{v}_{\i,\widehat{\eta}})_{\i=1}^{\infty})\leq~\beta(\mathcal{M}'_{\mathcal{U}_\eta}((\overline{w}_{\i,\widehat{\eta}})_{\i=1}^{\infty},(\overline{v}_{\i,\widehat{\eta}})_{\i=1}^{\infty}))\mathcal{M}'_{\mathcal{U}_\eta}((\overline{w}_{\i,\widehat{\eta}})_{\i=1}^{\infty},(\overline{v}_{\i,\widehat{\eta}})_{\i=1}^{\infty})
		\end{equation}
for all $\overline{w}_1, \overline{w}_2, \ldots, \overline{w}_{\eta}, \overline{v}_1, \overline{v}_2, \ldots, \overline{v}_\eta \in \overline{X}$, for a particular $\beta\in\mathbb{G}_{\frac{1}{2}}$.
		\end{definition}

\begin{example}
In the Example \ref{6}, repeating the $\eta$-th term infinitely we get generalized $\mathcal{C}_\eta$ contraction.
\end{example}

\par Next, let us give the first main theorem of our work, which is a generalization of the results given in \cite{Kan} and \cite{Fis}.

\begin{theorem}\label{3}
Let $\left(\overline{X},d\right)$ be a complete metric space and let $\mathcal{U}:\prod\limits_{l=1}^\infty \overline{X} \rightarrow \overline{X}$ be a generalized $\mathcal{C}_{\eta}$ $contraction$ for some $ \eta\in\mathbb{N} .$ Then the following holds: \begin{itemize}
\item[$(1)$] $\mathcal{U}$ has a unique fixed point, i.e. there exists $\overline{u}\in \overline{X}$ such that $\mathcal{U}((\overline{w})_{\i=1}^{\infty})=\overline{w}.$
\item[$(2)$] For any $\overline{x}_1, \overline{x}_2,...,\overline{x}_\eta\in X,$ the infinite $\eta-$Picard sequence converges to $\overline{w}.$
\end{itemize}
\end{theorem}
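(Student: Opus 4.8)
The plan is to run a Geraghty--Picard argument adapted to the $\eta$-windowed, infinitely iterated setting. Fix $\overline{x}_1,\dots,\overline{x}_\eta\in\overline{X}$ and define the infinite $\eta$-Picard sequence $(\overline{x}_n)_{n\ge 1}$ by letting $\overline{x}_{n+\eta}$ be the value of $\mathcal{U}$ on the infinite tuple obtained from the window $(\overline{x}_n,\dots,\overline{x}_{n+\eta-1})$ by repeating its last entry, i.e. on $(\overline{x}_n,\dots,\overline{x}_{n+\eta-1},\overline{x}_{n+\eta-1},\dots)$. Put $b_n:=d(\overline{x}_{n+\eta-1},\overline{x}_{n+\eta})$. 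First I would apply the generalized $\mathcal{C}_\eta$ contraction inequality to the two consecutive windows based at $(\overline{x}_n,\dots,\overline{x}_{n+\eta-1})$ and at $(\overline{x}_{n+1},\dots,\overline{x}_{n+\eta})$. On these windows one computes the Banach-type term $B_\eta=d(\overline{w}_\eta,\overline{v}_\eta)=b_n$, the Kannan-type term $K_\eta=b_n+b_{n+1}$, and $0\le F_\eta\le b_n+b_{n+1}$ by the triangle inequality, whence $\mathcal{M}'_n:=\mathcal{M}'_{\mathcal{U}_\eta}(\cdot,\cdot)\le b_n+(1-\alpha)b_{n+1}$ using $\alpha+2\gamma+2\delta=1$. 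Since $b_{n+1}\le\beta(\mathcal{M}'_n)\mathcal{M}'_n<\tfrac12\mathcal{M}'_n$, this forces $b_{n+1}<\tfrac{1}{1+\alpha}b_n\le b_n$, so $(b_n)$ decreases to some $L\ge0$.

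Next I would show $L=0$. From $\mathcal{M}'_n>2b_{n+1}$ one has $\liminf_n\mathcal{M}'_n\ge 2L$, while the estimate above gives $\limsup_n\mathcal{M}'_n\le(2-\alpha)L$. If $\alpha>0$ these two facts already yield $L=0$ (indeed $b_n\to0$ geometrically). If $\alpha=0$ they squeeze $\mathcal{M}'_n\to 2L$, so $b_{n+1}/\mathcal{M}'_n\to\tfrac12$; combined with $b_{n+1}/\mathcal{M}'_n\le\beta(\mathcal{M}'_n)<\tfrac12$ this gives $\beta(\mathcal{M}'_n)\to\tfrac12$, and since $\beta\in\mathbb{G}_{\frac12}$ we conclude $\mathcal{M}'_n\to0$, i.e. $L=0$. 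Hence $b_n\to0$, so consecutive distances of $(\overline{x}_n)$ tend to $0$.

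Then I would prove $(\overline{x}_n)$ is Cauchy by the standard contradiction scheme: if not, pick $\varepsilon>0$ and $m_k>n_k\to\infty$ with $d(\overline{x}_{m_k},\overline{x}_{n_k})\ge\varepsilon$ and $m_k$ minimal; then $d(\overline{x}_{m_k},\overline{x}_{n_k})\to\varepsilon$, and since consecutive distances vanish, $d(\overline{x}_{m_k+p},\overline{x}_{n_k+q})\to\varepsilon$ for all fixed $p,q\ge0$. Applying the contraction inequality to the windows based at $(\overline{x}_{m_k},\dots)$ and $(\overline{x}_{n_k},\dots)$, the $K$-part (a sum of two consecutive distances) tends to $0$, while $B_\eta\to\varepsilon$ and $F_\eta\to 2\varepsilon$, so $\mathcal{M}'_k\to(\alpha+4\delta)\varepsilon$ and $d(\overline{x}_{m_k+\eta},\overline{x}_{n_k+\eta})\to\varepsilon$. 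Using $\beta<\tfrac12$ and passing to the limsup gives $\varepsilon\le\tfrac12(\alpha+4\delta)\varepsilon$, i.e. $\alpha+4\delta\ge2$; but $\alpha+4\delta=1+2(\delta-\gamma)\le2$ always, so equality holds, forcing $\alpha=\gamma=0$, $\delta=\tfrac12$. In that remaining (pure-Fisher) case $\mathcal{M}'_k\to 2\varepsilon$, and $\varepsilon\le\liminf_k\beta(\mathcal{M}'_k)\mathcal{M}'_k=2\varepsilon\liminf_k\beta(\mathcal{M}'_k)$ forces $\beta(\mathcal{M}'_k)\to\tfrac12$, whence $\mathcal{M}'_k\to0$ by $\beta\in\mathbb{G}_{\frac12}$ --- contradicting $\mathcal{M}'_k\to2\varepsilon>0$. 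So $(\overline{x}_n)$ is Cauchy and, by completeness, $\overline{x}_n\to\overline{w}$ for some $\overline{w}\in\overline{X}$.

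Finally, to see $\overline{w}$ is a fixed point, apply the inequality to the window based at $(\overline{x}_n,\dots,\overline{x}_{n+\eta-1})$ against the constant window $(\overline{w},\overline{w},\dots)$: with $\overline{w}^{*}:=\mathcal{U}(\overline{w},\overline{w},\dots)$ one gets $B_\eta\to0$ and $K_\eta,F_\eta\to d(\overline{w},\overline{w}^{*})$, so $\mathcal{M}'_n\to(1-\alpha)d(\overline{w},\overline{w}^{*})$ while the left side $\to d(\overline{w},\overline{w}^{*})$; since $\beta<\tfrac12$ this gives $d(\overline{w},\overline{w}^{*})\le\tfrac{1-\alpha}{2}d(\overline{w},\overline{w}^{*})$, hence $\overline{w}^{*}=\overline{w}$. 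For uniqueness, if $\overline{u}$ is another fixed point, apply the inequality to the constant windows $(\overline{w},\dots)$ and $(\overline{u},\dots)$: then $K_\eta=0$, $B_\eta=d(\overline{w},\overline{u})$, $F_\eta=2d(\overline{w},\overline{u})$, so $d(\overline{w},\overline{u})\le\beta\cdot(\alpha+4\delta)d(\overline{w},\overline{u})$, and $\alpha+4\delta\le2$ together with $\beta<\tfrac12$ forces $d(\overline{w},\overline{u})=0$. Part $(2)$ is then immediate, because the construction began with an arbitrary choice of $\overline{x}_1,\dots,\overline{x}_\eta$ and the limit is always this unique fixed point. The step I expect to be the main obstacle is the Cauchy argument: one must evaluate $\mathcal{M}'_k$ on ``separated'' windows carefully, and --- precisely in the degenerate case $\alpha=\gamma=0$ --- observe that $\mathcal{M}'_k$ genuinely converges, so that the generalized Geraghty property rather than merely $\beta<\tfrac12$ can be invoked; this is exactly the role played by the class $\mathbb{G}_{\frac12}$.
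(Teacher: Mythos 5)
Your proof is correct and follows essentially the same route as the paper's: the same infinite $\eta$-Picard iteration, the same evaluation of the $B_\eta$, $K_\eta$, $F_\eta$ components on consecutive and on separated windows, the same Geraghty-limit device (forcing $\beta(\cdot)\to\tfrac12$ and hence the argument $\to 0$) to show the consecutive distances tend to $0$ and to rule out the Cauchy obstruction, and the same limiting inequalities for the existence and uniqueness of the fixed point. Your explicit case analysis (e.g. $\alpha>0$ versus $\alpha=0$, and the degenerate case $\alpha=\gamma=0,\ \delta=\tfrac12$ in the Cauchy step) is in fact slightly more careful than the paper's uniform treatment, but it is the same argument.
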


\begin{proof}
Let $\overline{x}_1, \overline{x}_2, ...,\overline{x}_\eta \in X,$ be arbitrary but fixed. We construct an infinite $\eta-$Picard sequence as:$$\overline{x}_{n+\eta}=\mathcal{U}\left(\left(\overline{x}_{n+\i-1,\widehat{n+\eta-1}}\right)_{\i=1}^{\infty}\right); \text{ for all } n\in \mathbb{N}. $$

We assert that $\lim_{\eta\rightarrow \infty} d(\overline{x}_{n+\eta+1},\overline{x}_{n+\eta+2})=0.$

Then, we have, \begin{align*}
& d\left(\overline{x}_{n+\eta+1},\overline{x}_{n+\eta+2}\right)\\
& = d\left(\mathcal{U}\left(\left(\overline{x}_{n+\i,\widehat{ n+\eta}}\right)_{\i=1}^{\infty} \right),\mathcal{U}\left(\left(\overline{x}_{n+\i+1,\widehat{ n+\eta+1}} \right)_{\i=1}^{\infty}\right)\right)\\
& \leq \beta \left(\mathcal{M}'_{\mathcal{U}_\eta} \left(\left(\overline{x}_{n+\i,\widehat{ n+\eta}}\right)_{\i=1}^{\infty},\left(\overline{x}_{n+\i+1,\widehat{n+\eta+1}} \right)_{\i=1}^{\infty}\right)\right) \mathcal{M}'_{\mathcal{U}_\eta} \left(\left(\overline{x}_{n+\i,\widehat{ n+\eta}}\right)_{\i=1}^{\infty},\left(\overline{x}_{n+\i+1,\widehat{n+\eta+1}} \right)_{\i=1}^{\infty}\right).
\end{align*}

Also, we have,\begin{align*}
& \mathcal{M}'_{\mathcal{U}_\eta} \left(\left(\overline{x}_{n+\i,\widehat{ n+\eta}}\right)_{\i=1}^{\infty},\left(\overline{x}_{n+\i+1,\widehat{n+\eta+1}} \right)_{\i=1}^{\infty}\right)\\
& = \alpha B_\eta \left(\left(\overline{x}_{n+\i,\widehat{ n+\eta}}\right)_{\i=1}^{\infty},\left(\overline{x}_{n+\i+1,\widehat{n+\eta+1}} \right)_{\i=1}^{\infty}\right)+2\gamma K_\eta \left( \left(\overline{x}_{n+\i,\widehat{ n+\eta}}\right)_{i=1}^{\infty},\left(\overline{x}_{n+\i+1,\widehat{n+\eta+1}} \right)_{\i=1}^{\infty}\right)\\
& + 2\delta F_\eta \left( \left(\overline{x}_{n+\i,\widehat{ n+\eta}}\right)_{\i=1}^{\infty},\left(\overline{x}_{n+\i+1,\widehat{n+\eta+1}} \right)_{\i=1}^{\infty}\right).
\end{align*}

On the other hand, \begin{align*}
& B_\eta \left( \left(\overline{x}_{n+\i,\widehat{ n+\eta}}\right)_{\i=1}^{\infty},\left(\overline{x}_{n+\i+1,\widehat{n+\eta+1}} \right)_{\i=1}^{\infty}\right)\\
& = d\left(\overline{x}_{n+\eta},\overline{x}_{n+\eta+1}\right)
\end{align*}

and, \begin{align*}
& K_\eta \left( \left(\overline{x}_{n+\i,\widehat{ n+\eta}}\right)_{\i=1}^{\infty},\left(\overline{x}_{n+\i+1,\widehat{n+\eta+1}} \right)_{\i=1}^{\infty}\right)\\
& = d\left( \mathcal{U}\left(\left(\overline{x}_{n+\i,\widehat{ n+\eta}}\right)_{\i=1}^{\infty}\right), \overline{x}_{n+\eta}\right)+d\left(\mathcal{U}\left(\left(\overline{x}_{n+\i+1,\widehat{n+\eta+1}} \right)_{\i=1}^{\infty}\right), \overline{x}_{n+\eta+1}\right)\\
& = d\left(\overline{x}_{n+\eta+1},\overline{x}_{n+\eta}\right)+d\left(\overline{x}_{n+\eta+2},\overline{x}_{n+\eta+1}\right).
\end{align*}

Also, \begin{align*}
& F_\eta \left( \left(\overline{x}_{n+\i,\widehat{ n+\eta}}\right)_{\i=1}^{\infty},\left(\overline{x}_{n+\i+1,\widehat{n+\eta+1}} \right)_{\i=1}^{\infty}\right)\\
& = d\left( \mathcal{U}\left(\left(\overline{x}_{n+\i,\widehat{ n+\eta}}\right)_{\i=1}^{\infty}\right), \overline{x}_{n+\eta+1}\right)+d\left(\mathcal{U}\left(\left(\overline{x}_{n+\i+1,\widehat{n+\eta+1}} \right)_{\i=1}^{\infty}\right), \overline{x}_{n+\eta}\right)\\
& = d\left(\overline{x}_{n+\eta+1},\overline{x}_{n+\eta+1}\right)+d\left(\overline{x}_{n+\eta+2},\overline{x}_{n+\eta}\right)\\
& \leq  d\left(\overline{x}_{n+\eta+1},\overline{x}_{n+\eta+2}\right)+d\left(\overline{x}_{n+\eta+1},\overline{x}_{n+\eta}\right).
\end{align*}

Thus, we get
\begin{align}
& d\left(\overline{x}_{n+\eta+1},\overline{x}_{n+\eta+2}\right)  \nonumber \\
& \leq \beta \left(\mathcal{M}'_{\mathcal{U}_\eta} \left(\left(\overline{x}_{n+i,\widehat{ n+\eta}}\right)_{\i=1}^{\infty},\left(\overline{x}_{n+\i+1,\widehat{n+\eta+1}} \right)_{\i=1}^{\infty}\right)\right) \mathcal{M}'_{\mathcal{U}_\eta} \left(\left(\overline{x}_{n+\i,\widehat{ n+\eta}}\right)_{\i=1}^{\infty},\left(\overline{x}_{n+\i+1,\widehat{n+\eta+1}} \right)_{\i=1}^{\infty}\right)  \nonumber \\
& < \frac{1}{2}\left[\alpha \{d\left(\overline{x}_{n+\eta},\overline{x}_{n+\eta+1}\right)+2\gamma \{d\left(\overline{x}_{n+\eta+1},\overline{x}_{n+\eta}\right)+ d\left(\overline{x}_{n+\eta+2},\overline{x}_{n+\eta+1}\right)\}\right.\nonumber\\
& \left.~~~~~ +2\delta \{d\left(\overline{x}_{n+\eta+1},\overline{x}_{n+\eta+2}\right)+d\left(\overline{x}_{n+\eta+1},\overline{x}_{n+\eta}\right)\}\right]. \tag{1}\label{eq_tag1}
\end{align}

Since, $\alpha +2\gamma+ 2\delta=1,$ we have, $$d\left(\overline{x}_{n+\eta+1},\overline{x}_{n+\eta+2}\right) < \frac{1}{2} [d\left(\overline{x}_{n+\eta},\overline{x}_{n+\eta+1}\right)+ d\left(\overline{x}_{n+\eta+1},\overline{x}_{n+\eta+2}\right)].$$

Therefore, $$d\left(\overline{x}_{n+\eta+1},\overline{x}_{n+\eta+2}\right)<d\left(\overline{x}_{n+\eta},\overline{x}_{n+\eta+1}\right).$$ This shows that $\{d\left(\overline{x}_{n+\eta},\overline{x}_{n+\eta+1}\right)\}$ is a decreasing sequence of non-negative real numbers.

Hence, there exists some $L\geq 0$ such that, $$\lim_{n\rightarrow \infty}  d\left(\overline{x}_{n+\eta},\overline{x}_{n+\eta+1}\right)=L.$$
We claim that $L=0$. If not, then let $L>0.$ \\ So, $$\lim_{n\rightarrow  \infty} B_\eta \left( \left(\overline{x}_{n+\i,\widehat{ n+\eta}}\right)_{\i=1}^{\infty},\left(\overline{x}_{n+\i+1,\widehat{n+\eta+1}} \right)_{\i=1}^{\infty}\right)=L;$$
$$\lim_{n\rightarrow  \infty} K_\eta \left( \left(\overline{x}_{n+i,\widehat{ n+\eta}}\right)_{\i=1}^{\infty},\left(\overline{x}_{n+\i+1,\widehat{n+\eta+1}} \right)_{\i=1}^{\infty}\right)=2L;$$
and $$\lim_{n\rightarrow  \infty} F_\eta \left( \left(\overline{x}_{n+\i,\widehat{ n+\eta}}\right)_{\i=1}^{\infty},\left(\overline{x}_{n+i+1,\widehat{n+\eta+1}} \right)_{i=1}^{\infty}\right)\leq2L.$$
Thus, $$\lim_{n\rightarrow  \infty} \mathcal{M}'_{\mathcal{U}_\eta} \left( \left(\overline{x}_{n+i,\widehat{ n+\eta}}\right)_{\i=1}^{\infty},\left(\overline{x}_{n+\i+1,\widehat{n+\eta+1}} \right)_{i=1}^{\infty}\right)\leq2L.$$

Then from \eqref{eq_tag1} we get
 \begin{align*}
& L \leq \lim_{n\rightarrow  \infty}\beta \left(\mathcal{M}'_{\mathcal{U}_\eta} \left(,\left(\overline{x}_{n+\i,\widehat{ n+\eta}}\right)_{\i=1}^{\infty},\left(\overline{x}_{n+\i+1,\widehat{n+\eta+1}} \right)_{\i=1}^{\infty}\right)\right)\cdot 2L\\
 \Rightarrow & \beta \left(\mathcal{M}'_{\mathcal{U}_\eta} \left(\left(\overline{x}_{n+\i,\widehat{ n+\eta}}\right)_{\i=1}^{\infty},\left(\overline{x}_{n+\i+1,\widehat{n+\eta+1}} \right)_{\i=1}^{\infty}\right)\right)\geq \frac{1}{2}\\
 \text{But, }& \lim_{n\rightarrow  \infty}\beta \left(\mathcal{M}'_{\mathcal{U}_\eta} \left(\left(\overline{x}_{n+\i,\widehat{ n+\eta}}\right)_{\i=1}^{\infty},\left(\overline{x}_{n+\i+1,\widehat{n+\eta+1}} \right)_{\i=1}^{\infty}\right)\right)\geq \frac{1}{2}\\
 \Rightarrow & \lim_{n\rightarrow  \infty}\beta \left(\mathcal{M}'_{\mathcal{U}_\eta} \left(\alpha,\left(\overline{x}_{n+\i,\widehat{ n+\eta}}\right)_{\i=1}^{\infty},\left(\overline{x}_{n+\i+1,\widehat{n+\eta+1}} \right)_{\i=1}^{\infty}\right)\right)= \frac{1}{2}\\
 \Rightarrow & \lim_{n\rightarrow  \infty} \mathcal{M}'_{\mathcal{U}_\eta} \left(\alpha,\left(\overline{x}_{n+\i,\widehat{ n+\eta}}\right)_{\i=1}^{\infty},\left(\overline{x}_{n+\i+1,\widehat{n+\eta+1}} \right)_{\i=1}^{\infty}\right)= 0\\
 \Rightarrow &  \lim_{n\rightarrow  \infty}\max \{d\left(\overline{x}_{n+\eta},\overline{x}_{n+\eta+1}\right),d\left(\overline{x}_{n+\eta+2},\overline{x}_{n+\eta}\right)\}=0\\
 \Rightarrow & \lim_{n\rightarrow  \infty} d\left(\overline{x}_{n+\eta},\overline{x}_{n+\eta+1}\right)=0; \text{i.e. } L=0.
\end{align*}

Contradiction. Therefore,
\begin{align}
\lim_{n\rightarrow  \infty} d\left(\overline{x}_{n+\eta},\overline{x}_{n+\eta+1}\right)=0.  \tag{2}\label{eq_tag2}
\end{align}

Next, we prove that the sequence $\{\overline{x}_{n+\eta}\}_{n\in \mathbb{N}}$ is a Cauchy sequence. We prove this by contradiction. Then let us suppose that $\{(\overline{x}_{n+\eta})_{n\in \mathbb{N}}\}$ is not a Cauchy sequence. Then there exists $\epsilon_{0}>0$ such that, for all $\eta\in \mathbb{N}$, there exists $p\leq n_p\leq m_p$ such that, $d\left(\overline{x}_{n(p)+\eta},\overline{x}_{m(p)+\eta}\right)\geq \epsilon_{0}$; $ \text{where } \{\overline{x}_{n(p)}\}_{p\in \mathbb{N}} \text{ and } \{\overline{x}_{m(p)}\}_{p\in \mathbb{N}} \text{ are two subsequences of }\{\overline{x}_{n+\eta}\}_{n\in \mathbb{N}}.$

 Then, for each $n(p)$ one can choose least of such $m_{p}$ so that, $$d\left(\overline{x}_{n(p)+\eta},\overline{x}_{m(p)+\eta-1}\right)< \epsilon_{0}.$$

Then \begin{align*}
& d(\overline{x}_{n(p)+\eta-1},\overline{x}_{m(p)+\eta-1}) \\
& \leq d(\overline{x}_{n(p)+\eta},\overline{x}_{n(p)+\eta-1})+ d(\overline{x}_{n(p)+\eta},\overline{x}_{m(p)+\eta-1}). \\
\text{i.e., } & \lim_{p\rightarrow \infty} d(\overline{x}_{n(p)+\eta-1},\overline{x}_{m(p)+\eta-1}) \leq \epsilon_{0}. \tag{3}\label{eq_tag3} \\
\text{Also, }  \epsilon_{0} & \leq d\left(\overline{x}_{n(p)+\eta},\overline{x}_{m(p)+\eta}\right)\\
& \leq d\left(\overline{x}_{n(p)+\eta},\overline{x}_{n(p)+\eta-1}\right)+ d\left(\overline{x}_{n(p)+\eta-1},\overline{x}_{m(p)+\eta-1}\right)+d(\overline{x}_{m(p)+\eta-1},\overline{x}_{m(p)+\eta}).\\
\text{i.e., } & \lim_{p\rightarrow \infty} d\left(\overline{x}_{n(p)+\eta-1},\overline{x}_{m(p)+\eta-1}\right) \geq \epsilon_{0}. \tag{4}\label{eq_tag4}
\end{align*}

Therefore, by \eqref{eq_tag3} and \eqref{eq_tag4} we get
$$\lim_{p\rightarrow \infty} d\left(\overline{x}_{n(p)+\eta-1},\overline{x}_{m(p)+\eta-1}\right) =\epsilon_{0} .$$

Next, follow \begin{align*}& B_\eta \left( \left(\overline{x}_{n(p)+\i-1,\widehat{ n(p)+\eta-1}}\right)_{\i=1}^{\infty},\left(\overline{x}_{m(p)+\i-1,\widehat{m(p)+\eta-1}} \right)_{\i=1}^{\infty}\right)\\
& = d\left(\overline{x}_{n(p)+\eta-1},\overline{x}_{m(p)+\eta-1}\right).\\
\end{align*}

Therefore, $$\lim_{p\rightarrow \infty}  B_\eta \left( \left(\overline{x}_{n(p)+\i-1,\widehat{ n(p)+\eta-1}}\right)_{\i=1}^{\infty},\left(\overline{x}_{m(p)+\i-1,\widehat{m(p)+\eta-1}} \right)_{\i=1}^{\infty}\right)= \epsilon_0.$$
\begin{align*}
& K_\eta \left( \left(\overline{x}_{n(p)+\i-1,\widehat{ n(p)+\eta-1}}\right)_{\i=1}^{\infty},\left(\overline{x}_{m(p)+\i-1,\widehat{m(p)+\eta-1}} \right)_{\i=1}^{\infty}\right)\\
& = d\left( \mathcal{U}\left(\left(\overline{x}_{n(p)+\i-1,\widehat{ n(p)+\eta-1}}\right)_{\i=1}^{\infty}\right), \overline{x}_{n(p)+\eta-1}\right)+ d\left(\mathcal{U}\left(\left(\overline{x}_{m(p)+\i-1,\widehat{m(p)+\eta-1}} \right)_{\i=1}^{\infty}\right), \overline{x}_{m(p)+\eta+1}\right)\\
& = d\left(\overline{x}_{n(p)+\eta},\overline{x}_{n(p)+\eta-1}\right)+d\left(\overline{x}_{m(p)+\eta},\overline{x}_{m(p)+\eta-1}\right).\\
\text{Thus, }& \lim_{p\rightarrow \infty}  K_\eta \left( \left(\overline{x}_{n(p)+\i-1,\widehat{ n(p)+\eta-1}}\right)_{\i=1}^{\infty},\left(\overline{x}_{m(p)+\i-1,\widehat{m(p)+\eta-1}} \right)_{\i=1}^{\infty}\right)=0.
\end{align*}

Again, \begin{align*}
& F_\eta \left( \left(\overline{x}_{n(p)+\i-1,\widehat{ n(p)+\eta-1}}\right)_{\i=1}^{\infty},\left(\overline{x}_{m(p)+\i-1,\widehat{m(p)+\eta-1}} \right)_{\i=1}^{\infty}\right)\\
& = d\left( \mathcal{U}\left(\left(\overline{x}_{n(p)+\i-1,\widehat{ n(p)+\eta-1}}\right)_{\i=1}^{\infty}\right), \overline{x}_{m(p)+\eta-1}\right)+d\left(\mathcal{U}\left(\left(\overline{x}_{m(p)+\i-1,\widehat{m(p)+\eta-1}} \right)_{\i=1}^{\infty}\right), \overline{x}_{n(p)+\eta-1}\right)\\
& = d\left(\overline{x}_{n(p)+\eta},\overline{x}_{m(p)+\eta-1}\right)+d\left(\overline{x}_{m(p)+\eta},x_{n(p)+\eta-1}\right).\\
\text{Thus, }& \lim_{p\rightarrow \infty} F_\eta \left( \left(\overline{x}_{n(p)+\i-1,\widehat{ n(p)+\eta-1}}\right)_{\i=1}^{\infty},\left(\overline{x}_{m(p)+\i-1,\widehat{m(p)+\eta-1}} \right)_{\i=1}^{\infty}\right)\leq 2\epsilon_{0}.
\end{align*}

Now, \begin{align*}
& d\left(\overline{x}_{n(p)+\eta},\overline{x}_{m(p)+\eta}\right)+d\left(\overline{x}_{n(p)+\eta-1},\overline{x}_{m(p)+\eta-1}\right)\\
& \leq d\left(\overline{x}_{n(p)+\eta},\overline{x}_{m(p)+\eta-1}\right)+d\left(\overline{x}_{m(p)+\eta-1},\overline{x}_{m(p)+\eta}\right)+d\left(\overline{x}_{n(p)+\eta-1},\overline{x}_{m(p)+\eta}\right)+d\left(\overline{x}_{m(p)+\eta},\overline{x}_{m(p)+\eta-1}\right).
\end{align*}
i.e., $$\lim_{p\rightarrow \infty} F_\eta \left( \left(\overline{x}_{n(p)+\i-1,\widehat{ n(p)+\eta-1}}\right)_{\i=1}^{\infty},\left(\overline{x}_{m(p)+\i-1,\widehat{m(p)+\eta-1}} \right)_{\i=1}^{\infty}\right)\geq 2\epsilon_{0}.$$

Thus, $$\lim_{p\rightarrow \infty} F_\eta \left( \left(\overline{x}_{n(p)+\i-1,\widehat{ n(p)+\eta-1}}\right)_{\i=1}^{\infty},\left(\overline{x}_{m(p)+\i-1,\widehat{m(p)+\eta-1}} \right)_{\i=1}^{\infty}\right)= 2\epsilon_{0}.$$

Now, \begin{align*}
& d\left(\overline{x}_{n(p)+\eta},\overline{x}_{m(p)+\eta}\right)\\
& = d\left( \mathcal{U}\left(\left(\overline{x}_{n(p)+\i-1,\widehat{ n(p)+\eta-1}}\right)_{\i=1}^{\infty}\right), T\left(\left(\overline{x}_{m(p)+\i-1,\widehat{ m(p)+\eta-1}}\right)_{\i=1}^{\infty}\right)\right)\\
& \leq \beta \left(\mathcal{M}'_{\mathcal{U}_\eta}\left(\left(\overline{x}_{n(p)+\i-1,\widehat{ n(p)+\eta-1}}\right)_{\i=1}^{\infty},\left(\overline{x}_{m(p)+\i-1,\widehat{ m(p)+\eta-1}}\right)_{\i=1}^{\infty}\right)\right)\\&  ~~~~~\mathcal{M}'_{\mathcal{U}_\eta}\left( \left(\overline{x}_{n(p)+\i-1,\widehat{ n(p)+\eta-1}}\right)_{\i=1}^{\infty},\left(\overline{x}_{m(p)+\i-1,\widehat{ m(p)+\eta-1}}\right)_{\i=1}^{\infty}\right).
\end{align*}

This implies, \begin{align*}
\epsilon_0\leq & \lim_{p\rightarrow\infty} \beta \left(\mathcal{M}'_{\mathcal{U}_\eta}\left(\left(\overline{x}_{n(p)+\i-1,\widehat{ n(p)+\eta-1}}\right)_{\i=1}^{\infty},\left(\overline{x}_{m(p)+\i-1,\widehat{ m(p)+\eta-1}}\right)_{\i=1}^{\infty}\right)\right)\cdot 2\epsilon_0\\
\text{i.e., } & \lim_{p\rightarrow\infty} \beta \left(\mathcal{M}'_{\mathcal{U}_\eta}\left(\left(\overline{x}_{n(p)+\i-1,\widehat{ n(p)+\eta-1}}\right)_{\i=1}^{\infty},\left(\overline{x}_{m(p)+\i-1,\widehat{ m(p)+\eta-1}}\right)_{\i=1}^{\infty}\right)\right)\geq \frac{1}{2}\\
\text{i.e., }& \lim_{p\rightarrow\infty} \beta \left(M'_{T_\eta}\left( \left(\overline{x}_{n(p)+\i-1,\widehat{ n(p)+\eta-1}}\right)_{\i=1}^{\infty},\left(\overline{x}_{m(p)+\i-1,\widehat{ m(p)+\eta-1}}\right)_{\i=1}^{\infty}\right)\right)=\frac{1}{2}\\
\text{i.e., }&\lim_{p\rightarrow\infty} \mathcal{M}'_{T_\eta}\left( \left(\overline{x}_{n(p)+\i-1,\widehat{ n(p)+\eta-1}}\right)_{\i=1}^{\infty},\left(\overline{x}_{m(p)+\i-1,\widehat{ m(p)+\eta-1}}\right)_{\i=1}^{\infty}\right)=0.
\end{align*}
Therefore, $2\epsilon_0=0~ \Rightarrow \epsilon_0=0.$ Contradiction. Therefore, $\{\overline{x}_{n+\eta}\}_{n\in \mathbb{N}}$ is a Cauchy sequence.

Since, $\overline{X}$ is a complete metric space, there exists $\overline{w}\in \overline{X}$ such that $\lim\limits_{n\rightarrow \infty} \overline{x}_{n+\eta}=\overline{w}. $

Next, we show that $\overline{w}$ is a fixed point of $\mathcal{U}$. We have, \begin{align*}
& \mathcal{M}'_{\mathcal{U}_\eta}\left(\left(\overline{x}_{n+\i-1, \widehat{n+\eta-1}}\right)_{\i=1}^{\infty}, \left(\overline{w}\right)_{\i=1}^{\infty}\right)\\
& = \alpha B_\eta \left(\left(\overline{x}_{n+\i-1, \widehat{n+\eta-1}}\right)_{\i=1}^{\infty}, \left(\overline{w}\right)_{\i=1}^{\infty}\right)+ 2\gamma K_\eta \left(\left(\overline{x}_{n+\i-1, \widehat{n+\eta-1}}\right)_{\i=1}^{\infty}, \left(\overline{w}\right)_{\i=1}^{\infty}\right)+2\delta F_\eta\left(\left(\overline{x}_{n+\i-1, \widehat{n+\eta-1}}\right)_{\i=1}^{\infty}, \left(\overline{w}\right)_{\i=1}^{\infty}\right)
\end{align*}

\noindent Now, \begin{align*}
&B_\eta \left(\left(\overline{x}_{n+\i-1, \widehat{n+\eta-1}}\right)_{\i=1}^{\infty}, \left(\overline{w}\right)_{\i=1}^{\infty}\right)\\
& = d\left(\overline{x}_{n+\eta-1},\overline{w}\right)
\end{align*}

Therefore, $$\lim_{n\rightarrow \infty} B_\eta \left(\left(\overline{x}_{n+\i-1, \widehat{n+\eta-1}}\right)_{\i=1}^{\infty}, \left(\overline{w}\right)_{\i=1}^{\infty}\right)= 0$$
and
\begin{align*}
&K_\eta \left(\left(\overline{x}_{n+\i-1, \widehat{n+\eta-1}}\right)_{\i=1}^{\infty}, \left(\overline{w}\right)_{\i=1}^{\infty}\right)\\
& = d\left(\mathcal{U}\left(\left(\overline{x}_{n+\i-1, \widehat{n+\eta-1}}\right)_{\i=1}^{\infty}\right),\overline{x}_{n+\eta-1}\right)+d\left(\mathcal{U}\left(\left(\overline{w}\right)_{\i=1}^{\infty}\right),\overline{w}\right)\\
& = d\left(\overline{x}_{n+\eta},\overline{x}_{n+\eta-1}\right)+d\left(\mathcal{U}\left(\left(\overline{w}\right)_{\i=1}^{\infty}\right),\overline{w}\right).
\end{align*}

Therefore, $$\lim_{n\rightarrow \infty} K_\eta \left(\left(\overline{x}_{n+\i-1, \widehat{n+\eta-1}}\right)_{\i=1}^{\infty}, \left(\overline{w}\right)_{\i=1}^{\infty}\right)= d\left(\mathcal{U}\left(\left(\overline{w}\right)_{\i=1}^{\infty}\right),\overline{w}\right).$$

 Next, we show that, $$ \lim_{n\rightarrow \infty}  F_\eta\left(\left(\overline{x}_{n+\i-1, \widehat{n+\eta-1}}\right)_{\i=1}^{\infty}, \left(\overline{w}\right)_{\i=1}^{\infty}\right)= d\left(\mathcal{U}\left(\left(\overline{w}\right)_{\i=1}^{\infty}\right),\overline{w}\right).$$

We have,
\begin{align*}
& F_\eta\left(\left(\overline{x}_{n+\i-1, \widehat{n+\eta-1}}\right)_{\i=1}^{\infty}, \left(\overline{w}\right)_{\i=1}^{\infty}\right)\\
& = d\left(\mathcal{U}\left(\left(\overline{x}_{n+\i-1, \widehat{n+\eta-1}}\right)_{\i=1}^{\infty}\right),\overline{w}\right)+d\left(\mathcal{U}\left(\left(\overline{w}\right)_{\i=1}^{\infty}\right),\overline{x}_{n+\eta-1}\right)\\
& \leq d\left(\overline{x}_{n+\eta},\overline{w}\right)+d\left(\mathcal{U}\left(\left(\overline{w}\right)_{\i=1}^{\infty}\right),\overline{w}\right)+d\left(\overline{w},\overline{x}_{n+\eta-1}\right).
\end{align*}

This implies,
$$ \lim_{n\rightarrow \infty}  F_\eta\left(\left(\overline{x}_{n+\i-1, \widehat{n+\eta-1}}\right)_{\i=1}^{\infty}, \left(\overline{w}\right)_{\i=1}^{\infty}\right)\leq d\left(\mathcal{U}\left(\left(\overline{w}\right)_{\i=1}^{\infty}\right),\overline{w}\right).$$
{Also, }$$d\left(\mathcal{U}\left(\left(\overline{w}\right)_{\i=1}^{\infty}\right),\overline{w}\right)\leq d\left(\overline{x}_{n+\eta},\overline{w}\right)+d\left(\overline{x}_{n+\eta},\overline{x}_{n+\eta-1}\right)+d\left(\overline{x}_{n+\eta-1},\mathcal{U}\left(\left(\overline{w}\right)_{\i=1}^{\infty}\right)\right)$$
$$\text{i.e., } \lim_{n\rightarrow \infty}  F_\eta\left(\left(\overline{x}_{n+\i-1, \widehat{n+\eta-1}}\right)_{\i=1}^{\infty}, \left(\overline{w}\right)_{\i=1}^{\infty}\right)\geq d\left(\mathcal{U}\left(\left(\overline{w}\right)_{\i=1}^{\infty}\right),\overline{w}\right)$$
Thus, $$ \lim_{n\rightarrow \infty}  F_\eta\left(\left(\overline{x}_{n+\i-1, \widehat{n+\eta-1}}\right)_{\i=1}^{\infty}, \left(\overline{w}\right)_{\i=1}^{\infty}\right)= d\left(\mathcal{U}\left(\left(\overline{w}\right)_{\i=1}^{\infty}\right),\overline{w}\right).$$

Again,
\begin{align*}
& d\left(\mathcal{U}\left(\left(\overline{w}\right)_{\i=1}^{\infty}\right),\overline{w}\right) \leq d\left(\overline{x}_{n+\eta},\overline{w}\right)+d\left(\mathcal{U}\left(\left(\overline{x}_{n+\i-1, \widehat{n+\eta-1}}\right)_{\i=1}^{\infty}\right),\mathcal{U}\left(\left(\overline{w}\right)_{\i=1}^{\infty}\right)\right)\\
& \leq d\left(\overline{x}_{n+\eta},\overline{w}\right)+\beta\left(\mathcal{M}'_{\mathcal{U}_\eta}\left(\left(\overline{x}_{n+\i-1, \widehat{n+\eta-1}}\right)_{\i=1}^{\infty}, \left(\overline{w}\right)_{\i=1}^{\infty}\right)\right)\mathcal{M}'_{\mathcal{U}_\eta}\left(\left(\overline{x}_{n+\i-1, \widehat{n+\eta-1}}\right)_{\i=1}^{\infty}, \left(\overline{w}\right)_{\i=1}^{\infty}\right)\\
\text{Therefore, }&  d\left(\mathcal{U}\left(\left(\overline{w}\right)_{\i=1}^{\infty}\right),\overline{w}\right)\\
& \leq \lim_{n\rightarrow \infty}\beta\left(\mathcal{M}'_{\mathcal{U}_\eta}\left(\left(\overline{x}_{n+\i-1, \widehat{n+\eta-1}}\right)_{\i=1}^{\infty}, \left(\overline{w}\right)_{\i=1}^{\infty}\right)\right)d\left(\mathcal{U}\left(\left(\overline{w}\right)_{\i=1}^{\infty}\right),\overline{w}\right).
\end{align*}

Then \begin{align*}
&  \lim_{n\rightarrow \infty}\beta\left(\mathcal{M}'_{\mathcal{U}_\eta}\left(\left(\overline{x}_{n+\i-1, \widehat{n+\eta-1}}\right)_{\i=1}^{\infty}, \left(\overline{w}\right)_{\i=1}^{\infty}\right)\right)\geq 1\geq \frac{1}{2}\\
\Rightarrow & \lim_{n\rightarrow \infty}\beta\left(\mathcal{M}'_{\mathcal{U}_\eta}\left(\left(\overline{x}_{n+\i-1, \widehat{n+\eta-1}}\right)_{\i=1}^{\infty}, \left(\overline{w}\right)_{\i=1}^{\infty}\right)\right)=\frac{1}{2}\\
\Rightarrow & \lim_{n\rightarrow \infty} \mathcal{M}'_{\mathcal{U}_\eta}\left(\left(\overline{x}_{n+\i-1, \widehat{n+\eta-1}}\right)_{\i=1}^{\infty}, \left(\overline{w}\right)_{\i=1}^{\infty}\right)=0\\
\Rightarrow & d\left(\mathcal{U}\left(\left(\overline{w}\right)_{\i=1}^{\infty}\right),\overline{w}\right)=0; \text{ i.e., } \mathcal{U}\left(\left(\overline{w}\right)_{\i=1}^{\infty}\right)=\overline{w}.\\
\end{align*}

Therefore, $\mathcal{U}$ has a fixed point.

Next, we prove that the fixed point of $\mathcal{U}$ is unique. Let us suppose, on the contrary, that $T$ has two different fixed points $\overline{w}$ and $\overline{v}.$ Then, $\mathcal{U}\left(\left(\overline{w}\right)_{i=1}^{\infty}\right)=\overline{w}$ and $\mathcal{U}\left(\left(\overline{v}\right)_{i=1}^{\infty}\right)=\overline{v}.$

In this conditions we have,\begin{align*}
&d(\overline{w},\overline{v})=d\left(\mathcal{U}\left(\left(\overline{w}\right)_{\i=1}^{\infty}\right),\mathcal{U}\left(\left(\overline{v}\right)_{\i=1}^{\infty}\right)\right)\\
& \leq \beta\left(\mathcal{M}'_{\mathcal{U}_\eta}\left(\left(\overline{w}\right)_{\i=1}^{\infty}, \left(\overline{v}\right)_{\i=1}^{\infty}\right)\right)\mathcal{M}'_{\mathcal{U}_\eta}\left(\left(\overline{w}\right)_{\i=1}^{\infty}, \left(\overline{v}\right)_{\i=1}^{\infty}\right)\\
& < \frac{1}{2}\left[\alpha \{d\left(\overline{w},\overline{v}\right)+ 2\gamma\{d\left(\mathcal{U}\left(\left(\overline{w}\right)_{\i=1}^{\infty}\right),\overline{w}\right)+d\left(\mathcal{U}\left(\left(\overline{v}\right)_{\i=1}^{\infty}\right),\overline{v}\right)\}\right.\\
& \left. ~~~~+2\delta \{d\left(\mathcal{U}\left(\left(\overline{w}\right)_{\i=1}^{\infty}\right),\overline{v}\right)+d\left(\mathcal{U}\left(\left(\overline{v}\right)_{\i=1}^{\infty}\right),\overline{w}\right)\}\right]\\
& = \frac{1}{2}\left[\alpha d\left(\overline{w},\overline{v}\right)+2\gamma \{d\left(\overline{w},\overline{w}\right)+d\left(\overline{v},\overline{v}\right)\}+2\delta\{d\left(\overline{w},\overline{v}\right)+d\left(\overline{v},\overline{w}\right)\}\right]\\
& <\frac{1}{2} \{d\left(\overline{w},\overline{v}\right)+d\left(\overline{v},\overline{w}\right)\}\\
&= d\left(\overline{w},\overline{v}\right)\\
\text{i.e., }d(\overline{w},\overline{v})<d(\overline{w},\overline{v}).
\end{align*}
Contradiction. Therefore, $w=v.$ This completes the proof.
\end{proof}

\begin{example}
Let $\overline{X}=[0,1]$ be a metric space with usual metric. Let us define $\mathcal{U}:\prod\limits_{\i=1}^{\infty}\overline{X}\rightarrow \overline{X}$ by:
\[\mathcal{U}((\overline{x}_{\i,\widehat{\eta}})_{\i=1}^{\infty})=\begin{cases}
\frac{\overline{x}_\eta^2}{30} & ~ \text{if}~ \overline{x}_\eta\in [0,1);\\
\frac{1}{60} & ~ \text{if}~ \overline{x}_\eta =1.
\end{cases}
\]
Then, $\mathcal{U}$ is a generalized $\mathcal{C}_\eta$ contraction for $\beta =\frac{3}{8}$, $\alpha =\frac{1}{2}$ $\gamma=\frac{1}{8}$ and $\delta=\frac{1}{8}$. Therefore, $\mathcal{U}$ has a unique fixed point $(0,0,...).$
\end{example}

\begin{corollary}\label{BKF}
Let $\overline{X}$ be a complete metric space and $\mathcal{U}:\prod\limits_{\i=1}^{\infty}\overline{X}\rightarrow \overline{X}$ be a Banach's (Kannan's or Fisher's) generalized contraction mapping for infinite dimension. Then $\mathcal{U}$ has a unique fixed point.
\end{corollary}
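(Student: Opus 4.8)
The plan is to recognise each of the three named contraction classes for infinite dimensions as a specialisation of the generalized $\mathcal{C}_\eta$ contraction and then quote Theorem~\ref{3}. Recall that $\mathcal{U}$ is a generalized $\mathcal{C}_\eta$ contraction precisely when $d(\mathcal{U}((\overline{w}_{i,\widehat{\eta}})_{i=1}^{\infty}),\mathcal{U}((\overline{v}_{i,\widehat{\eta}})_{i=1}^{\infty}))\le \beta(\mathcal{M}'_{\mathcal{U}_\eta})\,\mathcal{M}'_{\mathcal{U}_\eta}$, where $\mathcal{M}'_{\mathcal{U}_\eta}=\alpha B_\eta+2\gamma K_\eta+2\delta F_\eta$ with $\alpha+2\gamma+2\delta=1$ and $\beta\in\mathbb{G}_{1/2}$. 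The first step is the elementary observation that every constant map $\beta\equiv c$ with $c\in[0,\tfrac12)$ lies in $\mathbb{G}_{1/2}$: it takes values in $[0,\tfrac12)$, and the defining implication ``$\lim_n\beta(t_n)=\tfrac12\ \Rightarrow\ \lim_n t_n=0$'' is vacuously satisfied because $\beta(t_n)=c\neq\tfrac12$ for every $n$.

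The second step is the bookkeeping. Choosing the weights $(\alpha,2\gamma,2\delta)=(0,1,0)$ forces $\mathcal{M}'_{\mathcal{U}_\eta}=K_\eta$, so the inequality defining a generalized $\mathcal{C}_\eta$ contraction with $\beta\equiv c$ becomes exactly $d(\mathcal{U}(\cdot),\mathcal{U}(\cdot))\le c\,K_\eta(\cdot,\cdot)$, i.e.\ a Kannan's generalized contraction for infinite dimensions with its constant $c\in(0,\tfrac12)$. Symmetrically, $(\alpha,2\gamma,2\delta)=(0,0,1)$ gives $\mathcal{M}'_{\mathcal{U}_\eta}=F_\eta$ and reproduces Fisher's generalized contraction for infinite dimensions, while $(\alpha,2\gamma,2\delta)=(1,0,0)$ gives $\mathcal{M}'_{\mathcal{U}_\eta}=B_\eta=d(\overline{w}_\eta,\overline{v}_\eta)$ and reproduces the Banach-type contraction $d(\mathcal{U}(\cdot),\mathcal{U}(\cdot))\le c\,d(\overline{w}_\eta,\overline{v}_\eta)$. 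In all three cases $\alpha+2\gamma+2\delta=1$ holds and $\beta\equiv c\in\mathbb{G}_{1/2}$ by Step~1, so Theorem~\ref{3} applies directly and yields the unique fixed point (together, if one wants it, with convergence of the $\eta$-Picard sequences).

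The one point deserving a line of comment is that the degenerate weight vectors do not damage the argument of Theorem~\ref{3}: that proof uses only $\alpha+2\gamma+2\delta=1$ together with the triangle-inequality estimates on $B_\eta$, $K_\eta$, $F_\eta$, all of which persist when two of the three weights vanish. In fact, with $\beta$ constant the proof shortens, since the step where one is forced to conclude $\lim_n\beta(\mathcal{M}'_{\mathcal{U}_\eta}(\cdots))\ge\tfrac12$ is already a contradiction and the Geraghty-type property of $\beta$ is never invoked. I therefore do not anticipate a genuine obstacle: the corollary is pure specialisation, the only care being that the three classical contraction constants should be read as lying in $(0,\tfrac12)$ so that the associated constant $\beta$ sits inside $\mathbb{G}_{1/2}$.
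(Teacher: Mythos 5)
Your proposal is correct and takes essentially the same route as the paper: the paper's proof is exactly the one-line specialisation of Theorem~\ref{3} by choosing $\alpha=1$ (respectively $\gamma=\tfrac12$ or $\delta=\tfrac12$), which is your choice $(\alpha,2\gamma,2\delta)\in\{(1,0,0),(0,1,0),(0,0,1)\}$. Your additional observation that a constant $\beta\equiv c\in[0,\tfrac12)$ belongs to $\mathbb{G}_{\frac{1}{2}}$ merely makes explicit a detail the paper leaves implicit.
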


\begin{proof}
If we choose $\alpha =1$ ($\gamma=\frac{1}{2}$ or $\delta =\frac{1}{2}$ respectively) in Theorem \ref{3} the conclusion follow.
\end{proof}

\begin{theorem}\label{9}
Let $\left(\overline{X},d\right)$ be a complete metric space and let $\mathcal{U}:\prod\limits_{l=1}^\infty \overline{X} \rightarrow \overline{X}$ be a generalized $L^{\alpha}_\eta$ $contraction$ for some $ \eta\in\mathbb{N}.$ Then the following holds: \begin{itemize}
\item[$(1)$] $\mathcal{U}$ has a unique fixed point, i.e. there exists $\overline{w}\in \overline{X}$ such that $\mathcal{U}((\overline{w})_{\i=1}^{\infty})=\overline{w}.$
\item[$(2)$] For any $\overline{x}_1, \overline{x}_2, ...,\overline{x}_\eta\in X,$ the infinite $\eta-$Picard sequence converges to $\overline{w}.$
\end{itemize}
\end{theorem}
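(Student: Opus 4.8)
The plan is to run the argument used to prove Theorem~\ref{3}. Note first that when $f=f_1$ the operator is literally a generalized $\mathcal{C}_\eta$ contraction — take $\alpha=c_1$, $\gamma=c_2$, $\delta=c_3$, so that $L_{\mathcal{U}_\eta}=\mathcal{M}'_{\mathcal{U}_\eta}$ — hence Theorem~\ref{3} already gives the conclusion; so the genuinely new cases are $f=f_2=\max$ and $f=f_3=\min$, and I will handle all three uniformly by checking that each limiting step of the proof of Theorem~\ref{3} survives replacement of $\mathcal{M}'_{\mathcal{U}_\eta}$ by an arbitrary $f(B_\eta,K_\eta,F_\eta)$ with $f\in\mathcal F$.

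First I would fix $\overline x_1,\dots,\overline x_\eta\in\overline X$ and form the infinite $\eta$-Picard sequence $\overline x_{n+\eta}=\mathcal{U}\big((\overline x_{n+i-1,\widehat{n+\eta-1}})_{i=1}^{\infty}\big)$. Writing $a_n=d(\overline x_{n+\eta},\overline x_{n+\eta+1})$, the triangle-inequality computations already carried out in Theorem~\ref{3} give, along this sequence, $B_\eta=a_n$, $K_\eta=a_n+a_{n+1}$ and $F_\eta\le a_n+a_{n+1}$. Since every $f\in\mathcal F$ is nondecreasing in each variable and satisfies $\min\{x_1,x_2,x_3\}\le f(x_1,x_2,x_3)\le\max\{x_1,x_2,x_3\}$, in all three cases the contraction inequality gives $2a_{n+1}\le 2\beta(L_{\mathcal{U}_\eta})L_{\mathcal{U}_\eta}<L_{\mathcal{U}_\eta}\le a_n+a_{n+1}$ whenever $L_{\mathcal{U}_\eta}>0$ (and $a_{n+1}=0$ otherwise). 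Hence $a_{n+1}<a_n$ while $a_n>0$, so $\{a_n\}$ is non-increasing and converges to some $L\ge0$. If $L>0$, the sandwich $2a_{n+1}<L_{\mathcal{U}_\eta}\le a_n+a_{n+1}$ forces $L_{\mathcal{U}_\eta}\to2L$; dividing $a_{n+1}\le\beta(L_{\mathcal{U}_\eta})L_{\mathcal{U}_\eta}$ by $L_{\mathcal{U}_\eta}$ gives $\beta(L_{\mathcal{U}_\eta})\ge a_{n+1}/L_{\mathcal{U}_\eta}\to\tfrac12$, and since $\beta$ takes values in $[0,\tfrac12)$ this yields $\beta(L_{\mathcal{U}_\eta})\to\tfrac12$, whence $L_{\mathcal{U}_\eta}\to0$ by the defining property of $\beta\in\mathbb G_{1/2}$ — contradicting $L_{\mathcal{U}_\eta}\to2L>0$. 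Therefore $a_n\to0$.

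The remaining steps copy Theorem~\ref{3} almost word for word. For the Cauchy property I would assume the contrary, fix $\epsilon_0>0$ and, for each $p$, pick $n(p)\le m(p)$ with $d(\overline x_{n(p)+\eta},\overline x_{m(p)+\eta})\ge\epsilon_0$ and $m(p)$ least with $d(\overline x_{n(p)+\eta},\overline x_{m(p)+\eta-1})<\epsilon_0$; evaluating the auxiliary functions on the corresponding shifted tuples one gets, exactly as there, $B_\eta\to\epsilon_0$, $K_\eta\to0$, $F_\eta\to2\epsilon_0$, so $L_{\mathcal{U}_\eta}$ converges to some $\ell\in[0,2\epsilon_0]$. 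From $\epsilon_0\le d(\overline x_{n(p)+\eta},\overline x_{m(p)+\eta})\le\beta(L_{\mathcal{U}_\eta})L_{\mathcal{U}_\eta}<\tfrac12 L_{\mathcal{U}_\eta}$ we get $L_{\mathcal{U}_\eta}>2\epsilon_0$, so $\ell=2\epsilon_0>0$ and $\beta(L_{\mathcal{U}_\eta})\ge\epsilon_0/L_{\mathcal{U}_\eta}\to\tfrac12$; the Geraghty property then forces $L_{\mathcal{U}_\eta}\to0$, a contradiction (for $f=f_3$ one already has $\ell\le0$, which is absurd). By completeness $\overline w:=\lim\overline x_{n+\eta}$ exists. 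To see $\mathcal{U}\big((\overline w)_{i=1}^{\infty}\big)=\overline w$, put $D=d\big(\mathcal{U}((\overline w)_{i=1}^{\infty}),\overline w\big)$ and evaluate the auxiliary functions on $\big((\overline x_{n+i-1,\widehat{n+\eta-1}})_{i=1}^{\infty},(\overline w)_{i=1}^{\infty}\big)$; as in Theorem~\ref{3} (with the $F_\eta$ term squeezed between two triangle inequalities) they tend to $0$, $D$ and $D$, so $L_{\mathcal{U}_\eta}\to\lambda D$ with $\lambda\le1$, and $D\le d(\overline x_{n+\eta},\overline w)+\beta(L_{\mathcal{U}_\eta})L_{\mathcal{U}_\eta}$ then yields $D\le\tfrac12\lambda D\le\tfrac12 D$, so $D=0$. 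For uniqueness, if also $\mathcal{U}\big((\overline v)_{i=1}^{\infty}\big)=\overline v$, then on $\big((\overline w)_{i=1}^{\infty},(\overline v)_{i=1}^{\infty}\big)$ one has $B_\eta=d(\overline w,\overline v)$, $K_\eta=0$, $F_\eta=2d(\overline w,\overline v)$, hence $L_{\mathcal{U}_\eta}\le2d(\overline w,\overline v)$ for every $f\in\mathcal F$ (for $f_1$ because $c_1+4c_3\le2$; it equals $2d(\overline w,\overline v)$ for $f_2$ and $0$ for $f_3$), so $d(\overline w,\overline v)\le\beta(L_{\mathcal{U}_\eta})L_{\mathcal{U}_\eta}<\tfrac12 L_{\mathcal{U}_\eta}\le d(\overline w,\overline v)$, forcing $\overline w=\overline v$.

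The bulk of the work — the triangle-inequality evaluations of $B_\eta$, $K_\eta$, $F_\eta$ along the relevant tuples — is literally the same as in Theorem~\ref{3} and needs no change. The one genuinely new point, and the only place to be careful, is that these three quantities are now combined through an \emph{arbitrary} $f\in\mathcal F$: the argument survives precisely because $f_1,f_2,f_3$ are monotone and sandwiched between the minimum and the maximum of their arguments, which keeps the bounds $2a_{n+1}<L_{\mathcal{U}_\eta}\le a_n+a_{n+1}$, $L_{\mathcal{U}_\eta}\to2\epsilon_0$, $L_{\mathcal{U}_\eta}\to\lambda D$ and $L_{\mathcal{U}_\eta}\le2d(\overline w,\overline v)$ valid uniformly in $f$; once that observation is in place, the Geraghty-type limit passages of Theorem~\ref{3} go through verbatim.
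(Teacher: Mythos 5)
Your argument is correct, and on the $f=f_1$ case it coincides with the paper's: both you and the authors observe that with $\alpha=c_1$, $\gamma=c_2$, $\delta=c_3$ one has $L_{\mathcal{U}_\eta}=\mathcal{M}'_{\mathcal{U}_\eta}$, so the proof of Theorem~\ref{3} applies verbatim. For $f=f_2=\max$ and $f=f_3=\min$ you take a genuinely different route. The paper settles these cases in one sentence, asserting that each is "one of the special cases" of a Banach, Kannan or Fisher generalized contraction for infinite dimensions and invoking Corollary~\ref{BKF}; you instead rerun the whole Geraghty-type machinery of Theorem~\ref{3} with $\mathcal{M}'_{\mathcal{U}_\eta}$ replaced by $f(B_\eta,K_\eta,F_\eta)$, checking that the key estimates (the sandwich $2a_{n+1}<L_{\mathcal{U}_\eta}\le a_n+a_{n+1}$ along the Picard sequence, $L_{\mathcal{U}_\eta}\to 2\epsilon_0$ or an immediate contradiction in the Cauchy step, $L_{\mathcal{U}_\eta}\to\lambda D$ with $\lambda\le 1$ at the limit point, and $L_{\mathcal{U}_\eta}\le 2d(\overline w,\overline v)$ for uniqueness) persist because $f_1,f_2,f_3$ are monotone and sandwiched between the minimum and maximum of their arguments. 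Your longer route buys something real: for $f_2=\max$ the hypothesis $d(\mathcal{U}(\cdot),\mathcal{U}(\cdot))\le\beta(\max\{B_\eta,K_\eta,F_\eta\})\max\{B_\eta,K_\eta,F_\eta\}$ is \emph{weaker} than each of the pure Banach/Kannan/Fisher conditions (which demand a fixed $c\in(0,\tfrac12)$ against a single one of $B_\eta,K_\eta,F_\eta$), and for $f_3=\min$ the factor $\beta$ is evaluated at $\min\{B_\eta,K_\eta,F_\eta\}$ rather than at the relevant quantity and is not bounded away from $\tfrac12$, so neither case is literally a special case covered by Corollary~\ref{BKF}; your direct verification is what actually closes Case~II, and your treatment of the fixed-point step ($D\le\tfrac12\lambda D$) is cleaner than the paper's. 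One small point: in the uniqueness step the chain $d(\overline w,\overline v)\le\beta(L_{\mathcal{U}_\eta})L_{\mathcal{U}_\eta}<\tfrac12 L_{\mathcal{U}_\eta}\le d(\overline w,\overline v)$ presupposes $L_{\mathcal{U}_\eta}>0$; when $L_{\mathcal{U}_\eta}=0$ (e.g.\ $f_3$, or $f_1$ with $c_1=c_3=0$) the first inequality already forces $d(\overline w,\overline v)=0$, as you note parenthetically for $f_3$ — it would be cleaner to state this uniformly.
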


\begin{proof}
\textit{Case I}: Let $f=f_1\in \mathcal{F}_1$ in Definition \ref{LC}; then, using the proof of Theorem \ref{3}, we get the desired conclusion.\\
\textit{Case II}: Let $f=f_2 \in \mathcal{F}_2$, or $f=f_3 \in \mathcal{F}_3$ in Definition \ref{LC}. In each case we get one of the special case of Banach's, Kannan's or Fisher's generalized contraction mapping for infinite dimension. Then, from Corollary \ref{BKF}, we get the complete proof of the result.
\end{proof}

\begin{theorem}\label{5}
Let $\left(\overline{X},d\right)$ be a complete metric space and let $\mathcal{U}:\prod\limits_{l=1}^{\eta} \overline{X} \rightarrow \overline{X}$ be a $\mathcal{C}_{\eta}$ contraction for some $ \eta\in\mathbb{N}.$ Then $\mathcal{U}$ has a unique fixed point.
\end{theorem}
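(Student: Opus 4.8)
The plan is to deduce this from the infinite-dimensional Theorem~\ref{3}. Given the $\mathcal{C}_\eta$ contraction $\mathcal{U}\colon\prod_{l=1}^{\eta}\overline{X}\to\overline{X}$, define $\widetilde{\mathcal{U}}\colon\prod_{l=1}^{\infty}\overline{X}\to\overline{X}$ by $\widetilde{\mathcal{U}}((\overline{x}_{i})_{i=1}^{\infty}):=\mathcal{U}((\overline{x}_{i})_{i=1}^{\eta})$; that is, $\widetilde{\mathcal{U}}$ simply forgets the coordinates of an infinite tuple past the $\eta$-th. The first step is to check that $\widetilde{\mathcal{U}}$ is a generalized $\mathcal{C}_{\eta}$ contraction in the sense of the infinite-dimensional definition. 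This is bookkeeping only: on any tuple of the form $(\overline{w}_{i,\widehat{\eta}})_{i=1}^{\infty}$ one has $\widetilde{\mathcal{U}}((\overline{w}_{i,\widehat{\eta}})_{i=1}^{\infty})=\mathcal{U}((\overline{w}_{i})_{i=1}^{\eta})$, and since each of $B_{\eta}$, $K_{\eta}$, $F_{\eta}$ involves only the $\eta$-th coordinates of its two tuple arguments together with the operator values on those tuples, evaluating them on $(\overline{w}_{i,\widehat{\eta}})_{i=1}^{\infty}$ and $(\overline{v}_{i,\widehat{\eta}})_{i=1}^{\infty}$ reproduces exactly $B^{*}_{\mathcal{U}_\eta}$, $K^{*}_{\mathcal{U}_\eta}$, $F^{*}_{\mathcal{U}_\eta}$ at the finite tuples $(\overline{w}_{i})_{i=1}^{\eta}$, $(\overline{v}_{i})_{i=1}^{\eta}$. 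Hence $\mathcal{M}'_{\widetilde{\mathcal{U}}_\eta}$ restricted to such tuples equals ${\mathcal{M}'}^{*}_{\mathcal{U}_\eta}$, and the defining inequality of a $\mathcal{C}_{\eta}$ contraction for $\mathcal{U}$ is, word for word, the defining inequality of a generalized $\mathcal{C}_{\eta}$ contraction for $\widetilde{\mathcal{U}}$, with the same $\beta\in\mathbb{G}_{\frac{1}{2}}$ and the same $\alpha,\gamma,\delta$.

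With this identification in hand, Theorem~\ref{3} applied to $\widetilde{\mathcal{U}}$ yields a unique $\overline{w}\in\overline{X}$ with $\widetilde{\mathcal{U}}((\overline{w})_{i=1}^{\infty})=\overline{w}$, which unwinds to $\mathcal{U}(\overline{w},\overline{w},\ldots,\overline{w})=\overline{w}$; thus $\overline{w}$ is a fixed point of $\mathcal{U}$ in the required diagonal sense. Uniqueness transfers immediately: any $\overline{v}$ with $\mathcal{U}(\overline{v},\ldots,\overline{v})=\overline{v}$ satisfies $\widetilde{\mathcal{U}}((\overline{v})_{i=1}^{\infty})=\overline{v}$, so $\overline{v}=\overline{w}$ by the uniqueness part of Theorem~\ref{3}. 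As a bonus, statement~(2) of Theorem~\ref{3} read through $\widetilde{\mathcal{U}}$ shows that every finite $\eta$-Picard sequence of $\mathcal{U}$ converges to $\overline{w}$.

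A self-contained alternative is to mimic the proof of Theorem~\ref{3} directly in finite dimensions: fix $\overline{x}_1,\ldots,\overline{x}_\eta\in\overline{X}$, set $\overline{x}_{n+\eta}=\mathcal{U}(\overline{x}_n,\ldots,\overline{x}_{n+\eta-1})$, and run the same three phases --- (i)~$\{d(\overline{x}_{n+\eta},\overline{x}_{n+\eta+1})\}$ is strictly decreasing, with its limit forced to be $0$ by $\beta\in\mathbb{G}_{\frac{1}{2}}$ together with $\alpha+2\gamma+2\delta=1$; (ii)~$\{\overline{x}_{n+\eta}\}$ is Cauchy by the two-subsequence contradiction; (iii)~its limit $\overline{w}$ satisfies $d(\mathcal{U}(\overline{w},\ldots,\overline{w}),\overline{w})=0$ after estimating ${\mathcal{M}'}^{*}_{\mathcal{U}_\eta}((\overline{x}_{n+i-1})_{i=1}^{\eta},(\overline{w})_{i=1}^{\eta})$ --- and uniqueness follows from the usual $d(\overline{w},\overline{v})<d(\overline{w},\overline{v})$ contradiction. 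Every computation of $B^{*}_{\mathcal{U}_\eta}$, $K^{*}_{\mathcal{U}_\eta}$, $F^{*}_{\mathcal{U}_\eta}$ along these windows is the same as in Theorem~\ref{3}, since those functions see no coordinate but the $\eta$-th. The one point that genuinely needs attention --- and the only possible obstacle --- is that here the argument tuples are honest $\eta$-tuples of consecutive iterates rather than tuples with the last entry artificially repeated, so one must make sure the limit is identified as a diagonal fixed point $\mathcal{U}(\overline{w},\ldots,\overline{w})=\overline{w}$; in the reduction this is automatic from the shape of $\widetilde{\mathcal{U}}$, and in the direct argument it is exactly what phase (iii) delivers --- note that, as in Theorem~\ref{3}, no continuity of $\mathcal{U}$ is invoked, since $d(\mathcal{U}(\overline{w},\ldots,\overline{w}),\overline{w})$ is bracketed by triangle inequalities and the contraction bound rather than by a limit of operator values.
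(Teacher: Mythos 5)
Your argument is correct, but your primary route differs from the paper's. The paper's entire proof is the instruction to ``take the finite Picard sequence $\overline{x}_{n+\eta}=\mathcal{U}((\overline{x}_{n+\i-1})_{\i=1}^{\eta})$ in Theorem \ref{3}'' --- i.e.\ to rerun the infinite-dimensional argument verbatim with finite tuples, which is exactly your self-contained alternative (your phases (i)--(iii)). Your first route instead reduces the statement of Theorem \ref{5} to the \emph{statement} of Theorem \ref{3}: you extend $\mathcal{U}$ to $\widetilde{\mathcal{U}}((\overline{x}_{\i})_{\i=1}^{\infty})=\mathcal{U}((\overline{x}_{\i})_{\i=1}^{\eta})$, observe that $B$, $K$, $F$ (and hence $\mathcal{M}'$) evaluated on hat-tuples see only the $\eta$-th coordinates and the operator values, so the finite contraction hypothesis (which holds for \emph{all} $\eta$-tuples, hence in particular for the projections of hat-tuples) makes $\widetilde{\mathcal{U}}$ a generalized $\mathcal{C}_\eta$ contraction, and then the diagonal fixed point and its uniqueness transfer back since $\widetilde{\mathcal{U}}((\overline{w})_{\i=1}^{\infty})=\mathcal{U}(\overline{w},\ldots,\overline{w})$. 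This reduction buys something the paper's one-liner does not: Theorem \ref{3} is used as a black box, with no need to re-examine its proof or worry that its hypotheses formally concern maps on $\prod_{l=1}^{\infty}\overline{X}$ while the Picard sequence is now built from honest $\eta$-tuples; the paper's (and your alternative) route avoids the auxiliary operator but is really a proof-template repetition rather than an application of the theorem as stated. Both are sound, and you correctly flag the only delicate point in the direct version, namely that the limit must be identified as a diagonal fixed point without invoking continuity of $\mathcal{U}$.
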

\begin{proof}

Taking the finite Picard sequence $\overline{x}_{n+\eta}=\mathcal{U}((\overline{x}_{n+\i-1})_{\i=1}^{\eta})$ in Theorem \ref{3} we get the conclusion.
\end{proof}

\hspace{-.5cm}
\begin{theorem}\label{4}
Let $\left(\overline{X},d\right)$ be a complete metric space and let $\mathcal{U}:\prod\limits_{l=1}^\infty \overline{X} \rightarrow \overline{X}$ be a generalized $H^{\alpha}_{\eta}$ $contraction$ for some $ \eta\in\mathbb{N}.$ Then $\mathcal{U}$ has a unique fixed point.
\end{theorem}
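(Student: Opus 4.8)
The plan is to deduce Theorem \ref{4} from Theorem \ref{3} by recognizing that every generalized $H^{\alpha}_{\eta}$ contraction is a particular generalized $\mathcal{C}_{\eta}$ contraction, so that no fresh iteration argument is required. Recall that the $H^{\alpha}_{\eta}$ condition is governed by $\mathcal{M}_{\mathcal{U}_\eta}(\alpha,\cdot,\cdot)=\alpha K_{\eta}+(1-\alpha)F_{\eta}$, while the $\mathcal{C}_{\eta}$ condition is governed by $\mathcal{M}'_{\mathcal{U}_\eta}=\alpha_0 B_{\eta}+2\gamma K_{\eta}+2\delta F_{\eta}$ with nonnegative weights satisfying $\alpha_0+2\gamma+2\delta=1$. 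Choosing $\alpha_0=0$, $\gamma=\alpha/2$ and $\delta=(1-\alpha)/2$ is admissible, since $\alpha\in[0,1]$ forces these weights to be nonnegative and to sum to $1$, and it gives $\mathcal{M}'_{\mathcal{U}_\eta}=\mathcal{M}_{\mathcal{U}_\eta}(\alpha,\cdot,\cdot)$ \emph{identically} — not merely as a one–sided estimate, which is the point that matters here, since $t\mapsto\beta(t)t$ need not be monotone. With the same $\beta\in\mathbb{G}_{1/2}$, the defining inequality of a generalized $H^{\alpha}_{\eta}$ contraction then coincides term for term with that of a generalized $\mathcal{C}_{\eta}$ contraction.

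First I would carry out this identification explicitly on the repeated-coordinate tuples $(\overline{w}_{i,\widehat{\eta}})_{i=1}^{\infty}$ and $(\overline{v}_{i,\widehat{\eta}})_{i=1}^{\infty}$ appearing in the infinite-dimensional definitions, checking that $B_{\eta}$, $K_{\eta}$ and $F_{\eta}$ are evaluated at the same pair of tuples inside $\mathcal{M}_{\mathcal{U}_\eta}$ and inside $\mathcal{M}'_{\mathcal{U}_\eta}$. Once that is in place, Theorem \ref{3} applies verbatim to $\mathcal{U}$ and yields the unique fixed point $\overline{w}$ with $\mathcal{U}((\overline{w})_{i=1}^{\infty})=\overline{w}$ — and, as a byproduct, convergence of the infinite $\eta$-Picard sequence, which Theorem \ref{4} does not even claim.

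The only place that calls for a second look — and the nearest thing to an obstacle — is whether the proof of Theorem \ref{3} survives the degenerate choice $\alpha_0=0$, i.e.\ a vanishing Banach weight. Inspecting that proof, its three substantive steps — the strict decrease $d(\overline{x}_{n+\eta+1},\overline{x}_{n+\eta+2})<d(\overline{x}_{n+\eta},\overline{x}_{n+\eta+1})$, the Cauchy argument through the auxiliary constant $\epsilon_0$, and the uniqueness computation — each use only the normalization $\alpha_0+2\gamma+2\delta=1$ together with $\beta(t)<\tfrac12$ for all $t\ge 0$; none of them requires strict positivity of any individual weight. Hence nothing in Theorem \ref{3} has to be re-run or modified, and the reduction is complete. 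Should a self-contained write-up be preferred, one could instead transcribe the proof of Theorem \ref{3} with every occurrence of $\mathcal{M}'_{\mathcal{U}_\eta}$ replaced by $\mathcal{M}_{\mathcal{U}_\eta}$ and every $B_{\eta}$ term deleted, but this merely reproduces the same computation.
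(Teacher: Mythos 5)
Your reduction is exactly the paper's own proof: the paper deduces Theorem \ref{4} from Theorem \ref{3} by taking the Banach weight equal to $0$, $\gamma=\alpha/2$ and $\delta=(1-\alpha)/2$, so that $\mathcal{M}'_{\mathcal{U}_\eta}$ becomes $\alpha K_{\eta}+(1-\alpha)F_{\eta}=\mathcal{M}_{\mathcal{U}_\eta}(\alpha,\cdot,\cdot)$. Your additional check that the argument of Theorem \ref{3} tolerates a vanishing Banach weight is a careful touch the paper omits, but the route is the same.
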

\begin{proof}
Taking $\alpha =0, \gamma=\frac{\alpha}{2} \text{ and } \delta =\frac{(1-\alpha )}{2}$ respectively) in Theorem \ref{3} we get the conclusion.
\end{proof}
\hspace{-.5cm}
\begin{example}
Let $\overline{X}=[0,1]$ be a metric space with usual metric. Let us define $\mathcal{U}:\prod\limits_{i=1}^{\infty}\overline{X}\rightarrow \overline{X}$ by:
\[\mathcal{U}((\overline{x}_{\i,\widehat{\eta}})_{\i=1}^{\infty})=\begin{cases}
\frac{\overline{x}_\eta}{10} & ~ \text{if}~ \overline{x}_\eta\in [0,1);\\
\frac{1}{25} & ~ \text{if}~ \overline{x}_\eta =1.
\end{cases}
\]
Then $\mathcal{U}$ is a generalized $H^{\alpha}_\eta$ contraction, for $\beta =\frac{1}{4}$ and $\alpha =\frac{1}{2}.$ Therefore, $\mathcal{U}$ has a unique fixed point $(0,0,...).$
\end{example}
\hspace{8pt}
\begin{theorem}
Let $\left(\overline{X},d\right)$ be a complete metric space and let $\mathcal{U}:\prod\limits_{l=1}^{\eta} \overline{X} \rightarrow \overline{X}$ be a $H_{\eta}^{\alpha}$ contraction for some $ \eta\in\mathbb{N}.$ Then $\mathcal{U}$ has a unique fixed point.
\end{theorem}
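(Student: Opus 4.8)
The plan is to obtain this statement as a finite-dimensional specialisation of what is already proved, exactly as Theorem \ref{5} was deduced from Theorem \ref{3}. First I would record that the $H^{\alpha^{*}}_{\eta}$ contraction condition is the ``$B$-coefficient zero'' case of a $\mathcal{C}_{\eta}$ contraction: reading $\mathcal{M}^{*}_{\eta}(\alpha,\cdot,\cdot)$ as the finite analogue $\alpha K^{*}_{\mathcal{U}_\eta}(\cdot,\cdot)+(1-\alpha)F^{*}_{\mathcal{U}_\eta}(\cdot,\cdot)$ of $\mathcal{M}_{\mathcal{U}_\eta}$, one checks that with weights $\alpha_{\mathcal{C}}=0$, $\gamma=\tfrac{\alpha}{2}$, $\delta=\tfrac{1-\alpha}{2}$ (so that $\alpha_{\mathcal{C}}+2\gamma+2\delta=1$) the function $\mathcal{M}'^{*}_{\mathcal{U}_\eta}=\alpha_{\mathcal{C}}B^{*}_{\mathcal{U}_\eta}+2\gamma K^{*}_{\mathcal{U}_\eta}+2\delta F^{*}_{\mathcal{U}_\eta}$ collapses to exactly $\mathcal{M}^{*}_{\eta}(\alpha,\cdot,\cdot)$. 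Hence any $H^{\alpha^{*}}_{\eta}$ contraction with companion $\beta\in\mathbb{G}_{1/2}$ is in particular a $\mathcal{C}_{\eta}$ contraction with the same $\beta$, and Theorem \ref{5} immediately yields existence and uniqueness of the fixed point. This is the finite mirror of the argument used for Theorem \ref{4}.

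If a self-contained argument is preferred, I would replay the proof of Theorem \ref{3}, replacing the infinite $\eta$-Picard sequence by the finite one $\overline{x}_{n+\eta}=\mathcal{U}((\overline{x}_{n+i-1})_{i=1}^{\eta})$ for $n\in\mathbb{N}$, from arbitrary $\overline{x}_1,\dots,\overline{x}_\eta\in\overline{X}$. The steps are unchanged: (i) evaluate $B^{*}_{\mathcal{U}_\eta},K^{*}_{\mathcal{U}_\eta},F^{*}_{\mathcal{U}_\eta}$ on consecutive tuples of the sequence — they depend only on the $\eta$-th coordinates $\overline{x}_{n+\eta-1},\overline{x}_{n+\eta}$ and on the images $\mathcal{U}(\cdots)=\overline{x}_{n+\eta+1}$, just as in the infinite case — to deduce $d(\overline{x}_{n+\eta+1},\overline{x}_{n+\eta+2})<d(\overline{x}_{n+\eta},\overline{x}_{n+\eta+1})$, so this distance decreases to some $L\ge 0$; (ii) if $L>0$, push the inequality through $\beta$ and invoke $\beta\in\mathbb{G}_{1/2}$ to force $\mathcal{M}^{*}_{\eta}\to 0$, a contradiction, hence $L=0$; (iii) the usual two-subsequence argument with a threshold $\epsilon_0>0$ shows $\{\overline{x}_{n+\eta}\}$ is Cauchy, so it converges to some $\overline{w}$ by completeness; (iv) the triangle-inequality sandwich of Theorem \ref{3} gives $\mathcal{M}^{*}_{\eta}\big((\overline{x}_{n+i-1})_{i=1}^{\eta},(\overline{w})_{i=1}^{\eta}\big)\to d(\mathcal{U}((\overline{w})_{i=1}^{\eta}),\overline{w})$, and feeding this into the contraction inequality forces $d(\mathcal{U}((\overline{w})_{i=1}^{\eta}),\overline{w})=0$; (v) uniqueness follows from the same estimate: two fixed points $\overline{w},\overline{v}$ satisfy, using $K^{*}_{\eta}=0$ and $F^{*}_{\eta}=2d(\overline{w},\overline{v})$ at that pair, $d(\overline{w},\overline{v})\le\beta(\cdot)\,\big[\alpha\, d(\overline{w},\overline{v})+(1-\alpha)\,2d(\overline{w},\overline{v})\big]<d(\overline{w},\overline{v})$, a contradiction.

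I do not expect a genuine obstacle here: the mathematical content is entirely contained in Theorem \ref{3}/Theorem \ref{5}, and the work is bookkeeping — fixing the reading of $\mathcal{M}^{*}_{\eta}$ as $\alpha K^{*}_{\mathcal{U}_\eta}+(1-\alpha)F^{*}_{\mathcal{U}_\eta}$, and verifying that only the $\eta$-th coordinates and the $\mathcal{U}$-images enter $B^{*},K^{*},F^{*}$ so that the finite computation mirrors the infinite one line for line. The single technical point worth care is that, as in Theorem \ref{3}, the $F^{*}_{\eta}$-term must be estimated from both sides to pin down its limit along the Picard sequence; everything else is a direct transcription.
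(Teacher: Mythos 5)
Your reduction is correct and is essentially the paper's own argument: the paper proves this statement by feeding the finite Picard sequence $\overline{x}_{n+\eta}=\mathcal{U}((\overline{x}_{n+i-1})_{i=1}^{\eta})$ into Theorem \ref{4}, whereas you compose the same two reductions in the opposite order, specializing the weights ($\alpha_{\mathcal{C}}=0$, $\gamma=\tfrac{\alpha}{2}$, $\delta=\tfrac{1-\alpha}{2}$) to land in Theorem \ref{5}; both routes rest on exactly the same computation in Theorem \ref{3}. Your explicit reading of $\mathcal{M}^{*}_{\eta}(\alpha,\cdot,\cdot)$ as $\alpha K^{*}_{\mathcal{U}_\eta}+(1-\alpha)F^{*}_{\mathcal{U}_\eta}$, together with the optional self-contained replay, only makes explicit what the paper leaves implicit, so there is no gap.
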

\begin{proof}
Taking the finite Picard sequence $\overline{x}_{n+\eta}=\mathcal{U}((\overline{x}_{n+\i-1})_{\i=1}^{\eta})$ in Theorem \ref{4} we get the conclusion.
\end{proof}
\hspace{-.5cm}

\begin{corollary}
Let $X$ be a complete metric space and $\mathcal{U}:\prod\limits_{\i=1}^{\eta}\overline{X}\rightarrow \overline{X}$ be a Banach's (Kannan's or Fisher's) generalized contraction mapping for finite dimensions. Then $\mathcal{U}$ has a unique fixed point.
\end{corollary}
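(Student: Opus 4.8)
The plan is to recognise each of the three ``generalized contraction mappings for finite dimensions'' as a particular instance of a $\mathcal{C}_\eta$ contraction and then invoke Theorem \ref{5}. Recall that an operator $\mathcal{U}:\prod\limits_{l=1}^{\eta}\overline{X}\to\overline{X}$ is a $\mathcal{C}_\eta$ contraction exactly when
\[
 d\left(\mathcal{U}((\overline{w}_{i})_{i=1}^{\eta}),\mathcal{U}((\overline{v}_{i})_{i=1}^{\eta})\right)\leq \beta\left(\mathcal{M}'^{*}_{\eta}\right)\,\mathcal{M}'^{*}_{\eta},
\]
with $\mathcal{M}'^{*}_{\eta}=\alpha B^{*}_{\mathcal{U}_\eta}+2\gamma K^{*}_{\mathcal{U}_\eta}+2\delta F^{*}_{\mathcal{U}_\eta}$, $\alpha+2\gamma+2\delta=1$, and some $\beta\in\mathbb{G}_{1/2}$. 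The first step is the observation that every constant function $\beta\equiv c$ with $c\in(0,1/2)$ belongs to $\mathbb{G}_{1/2}$: since $\beta(t_n)=c\neq\tfrac12$ for every $n$, the premise ``$\lim_n\beta(t_n)=\tfrac12$'' never occurs, so the defining implication holds vacuously.

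Next I would dispose of the three cases separately. If $\mathcal{U}$ is a Banach generalized contraction for finite dimensions, i.e. $d(\mathcal{U}((\overline{w}_{i})_{i=1}^{\eta}),\mathcal{U}((\overline{v}_{i})_{i=1}^{\eta}))\leq c\,B^{*}_{\mathcal{U}_\eta}=c\,d(\overline{w}_\eta,\overline{v}_\eta)$ for a fixed $c\in(0,1/2)$, take $\alpha=1$, $\gamma=\delta=0$; then $\mathcal{M}'^{*}_{\eta}=B^{*}_{\mathcal{U}_\eta}$ and, with the constant selection function $\beta\equiv c$, the displayed inequality is precisely the $\mathcal{C}_\eta$ contraction condition. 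If $\mathcal{U}$ is a Kannan generalized contraction for finite dimensions, so that $d(\mathcal{U}(\cdot),\mathcal{U}(\cdot))\leq c\,K^{*}_{\eta}$, take $\alpha=\delta=0$, $\gamma=\tfrac12$, giving $\mathcal{M}'^{*}_{\eta}=K^{*}_{\mathcal{U}_\eta}$; and if $\mathcal{U}$ is a Fisher generalized contraction for finite dimensions, so that $d(\mathcal{U}(\cdot),\mathcal{U}(\cdot))\leq c\,F^{*}_{\eta}$, take $\alpha=\gamma=0$, $\delta=\tfrac12$, giving $\mathcal{M}'^{*}_{\eta}=F^{*}_{\mathcal{U}_\eta}$. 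In each instance $\alpha+2\gamma+2\delta=1$ holds and $\mathcal{U}$ satisfies the $\mathcal{C}_\eta$ contraction inequality with $\beta\equiv c\in\mathbb{G}_{1/2}$, so Theorem \ref{5} applies and yields a unique fixed point.

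An alternative, slightly closer in spirit to the proof of Corollary \ref{BKF}, is to lift $\mathcal{U}$ to an operator on infinite tuples by setting $\widetilde{\mathcal{U}}((\overline{x}_{i})_{i=1}^{\infty})=\mathcal{U}((\overline{x}_1,\dots,\overline{x}_\eta))$. Since the auxiliary functions $B$, $K$, $F$ only involve the $\eta$-th coordinates and the values of the operator, the finite-dimensional contraction inequality for $\mathcal{U}$ passes verbatim to the corresponding infinite-dimensional one for $\widetilde{\mathcal{U}}$; Corollary \ref{BKF} then gives $\widetilde{\mathcal{U}}$ a unique fixed point $\overline{w}$, and $\widetilde{\mathcal{U}}((\overline{w})_{i=1}^{\infty})=\mathcal{U}((\overline{w})_{i=1}^{\eta})$ transfers both existence and uniqueness back to $\mathcal{U}$. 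I do not anticipate any genuine difficulty here; the only points requiring care are the bookkeeping of the weights (they must satisfy $\alpha+2\gamma+2\delta=1$ on the nose) and the verification that a fixed $c\in(0,1/2)$ is a bona fide element of $\mathbb{G}_{1/2}$, both already settled above, so the corollary is essentially the finite-dimensional shadow of Theorem \ref{5}.
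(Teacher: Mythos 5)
Your proposal is correct and follows essentially the same route as the paper: the paper's one-line proof likewise specializes the weights ($\alpha=1$, or $\gamma=\tfrac12$, or $\delta=\tfrac12$) to view each contraction as a $\mathcal{C}_\eta$ contraction and invokes Theorem \ref{5}. Your added check that the constant function $\beta\equiv c\in(0,\tfrac12)$ lies in $\mathbb{G}_{1/2}$ is a detail the paper leaves implicit, and your alternative lifting argument is just an optional variant of the same idea.
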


\begin{proof}
 Taking $\alpha =1$ ($\gamma=\frac{1}{2}$ or $\delta =\frac{1}{2}$ respectively) in Theorem \ref{5} the conclusion follow.
\end{proof}

\section{Application to Volterra type integral equations}

\quad The Volterra integral equation was introduced by Vito Volterra in the end of $19^{th}$ century. Later, at beginning of the $20^{th}$ century, the Romanian mathematician Traian Lalescu studied in his PhD thesis this type of equations, under the supervision of \'Emile Picard. Volterra integral equations have many applications in different domains as demography, economic processes, physics and astrophysics. Then, the aim of our section is to put in evidence the fixed point method for proving the existence of a solution for a Volterra type integral equation.

\quad Let us recall the usual Volterra integral equation of first kind as
$$\overline{w}(t)= \lambda \int_{a}^{t} \mathbb{K}(t,\overline{w}(s))ds,$$
where $\lambda$ is a non-zero real and $\mathbb{K}:[a,b]\times \mathbb{R}\rightarrow \mathbb{R}$ is a real valued continuous function. We generalise in this way the notion of usual Volterra integral equation.

We know that $\mathbb{R}^{\omega}$ is the countable product of $\mathbb{R}$. We restricted this set and define $\mathbb{R}^{\omega}_\eta$ as, $$\mathbb{R}^{\omega}_\eta=\{\overline{w}: \overline{w}=(\overline{w}_1,\overline{w}_2,...,\overline{w}_\eta,\overline{w}_\eta,... )\};$$  for some $\eta\in\mathbb{N}.$ Let $[a,b]\subset\mathbb{R},$ then $D=[a,b]\times[a,b]\times[a,b]\times...$ is a subset of $\mathbb{R}^{\omega}$.

 Let $\mathcal{C}[a,b]$ denote the set of all continuous function on $[a,b].$ Then, $d: \mathcal{C}[a,b]\times\mathcal{C}[a,b]\rightarrow \mathbb{R}_{+}$ be defined by \begin{align*}
 d\left(f(\overline{x}),g(\overline{x})\right)=\sup_{\overline{x}\in [a,b]}\left|f(\overline{x})-g(\overline{x})\right|e^{-m\overline{x}} \tag{5}\label{eq_tag5}
 \end{align*} for all $f(\overline{x}),~g(\overline{x})\in\mathcal{C}[a,b].$ So, $(\mathcal{C}[a,b],d)$ is a complete metric space.

Let us define a integral equation, where the unknown function is an infinite-tuple of the special form (element of $\mathbb{R}^{\omega}_\eta$) as follows
\begin{align*}
(\overline{w}_{i,\hat{\eta}})_{\i=1}^{\infty}(t)=\lambda \int_{a}^{t} \mathbb{K}(t,(\overline{w}_{\i,\hat{\eta}})_{\i=1}^{\infty}(s))ds; \tag{6}\label{eq_tag6}
\end{align*}
where $\mathbb{K}:[a,b]\times \mathbb{R}^{\omega}_\eta\rightarrow \mathbb{R}$ is a continuous function.

In the section, we assured the existence and uniqueness of the solution of the integral equation \eqref{eq_tag6}.
\begin{theorem}
Let $\mathbb{K}:[a,b]\times \mathbb{R}^{\omega}_\eta\rightarrow \mathbb{R}$ be a continuous function and $f\in \mathbb{G}_{\frac{1}{2}}$. If the given condition holds:
\begin{align*}
& \left|\mathbb{K}\left(\overline{x},(\overline{w}_{\i,\hat{\eta}})_{\i=1}^{\infty}(t)\right)-\mathbb{K}\left(\overline{x},(\overline{v}_{\i,\hat{\eta}})_{\i=1}^{\infty}(t)\right)\right|\\
& \leq f\left(L\left(\overline{x},(\overline{w}_{\i,\hat{\eta}})_{\i=1}^{\infty}(t),(\overline{v}_{\i,\hat{\eta}})_{\i=1}^{\infty}(t))\right)e^{-mt}\right)L\left(\overline{x},(\overline{w}_{\i,\hat{\eta}})_{\i=1}^{\infty}(t),(\overline{v}_{\i,\hat{\eta}})_{\i=1}^{\infty}(t))\right)\\
\end{align*}
where,
\begin{align*}
&L\left(\overline{x},(\overline{w}_{\i,\hat{\eta}})_{\i=1}^{\infty}(t),(\overline{v}_{\i,\hat{\eta}})_{\i=1}^{\infty}(t))\right)\\
&=\alpha\left\{\left|\overline{w}_\eta(t)-\lambda\int_{a}^{\overline{x}} \mathbb{K}(\overline{x},(\overline{w}_{\i,\hat{\eta}})_{\i=1}^{\infty}(t))dt\right|+\left|\overline{v}_\eta(t)-\lambda\int_{a}^{\overline{x}} \mathbb{K}(\overline{x},(\overline{v}_{\i,\hat{\eta}})_{\i=1}^{\infty}(t))dt\right|\right\}+\\
&(1-\alpha)\left\{\left|\overline{w}_\eta(t)-\lambda\int_{a}^{\overline{x}} \mathbb{K}(\overline{x},(\overline{v}_{\i,\hat{\eta}})_{\i=1}^{\infty}(t))dt\right|+\left|\overline{v}_\eta(t)-\lambda\int_{a}^{\overline{x}} \mathbb{K}(\overline{x},(\overline{w}_{\i,\hat{\eta}})_{\i=1}^{\infty}(t))dt\right|\right\};
\end{align*}
for some $\alpha \in [0,1];$
then, the integral equation \eqref{eq_tag6} has a unique solution.
\end{theorem}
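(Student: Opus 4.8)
The strategy is to recast the integral equation \eqref{eq_tag6} as a fixed point equation for an operator on the complete metric space $\left(\mathcal{C}[a,b],d\right)$, with $d$ the Bielecki-type metric \eqref{eq_tag5}, and then to invoke Theorem \ref{4} (equivalently, Theorem \ref{3} with vanishing $B$-coefficient). Define $\mathcal{U}:\prod_{l=1}^{\infty}\mathcal{C}[a,b]\to\mathcal{C}[a,b]$, acting on tuples of the admissible form $(\overline{w}_{i,\widehat{\eta}})_{i=1}^{\infty}$, by
$$\mathcal{U}\left((\overline{w}_{i,\widehat{\eta}})_{i=1}^{\infty}\right)(t)=\lambda\int_{a}^{t}\mathbb{K}\left(t,(\overline{w}_{i,\widehat{\eta}})_{i=1}^{\infty}(s)\right)ds.$$
First I would check that $\mathcal{U}$ is well defined, i.e. that $t\mapsto\mathcal{U}(\cdots)(t)$ is continuous on $[a,b]$; this is immediate from the continuity of $\mathbb{K}$ and its uniform continuity on the relevant compact set, so $\mathcal{U}$ indeed lands in $\mathcal{C}[a,b]$. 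By construction, $\overline{w}\in\mathbb{R}^{\omega}_{\eta}$ solves \eqref{eq_tag6} if and only if $\mathcal{U}\left((\overline{w})_{i=1}^{\infty}\right)=\overline{w}$, so it suffices to produce a unique fixed point of $\mathcal{U}$.

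Next I would verify that $\mathcal{U}$ is a generalized $H^{\alpha}_{\eta}$ contraction for $d$, with Geraghty function $\beta=f$ and with the weight $m$ in \eqref{eq_tag5} chosen large, say $m>|\lambda|$. The key identification is that, computed in the metric $d$, the quantity $\mathcal{M}_{\mathcal{U}_\eta}(\alpha,\overline{w},\overline{v})=\alpha K_{\eta}(\overline{w},\overline{v})+(1-\alpha)F_{\eta}(\overline{w},\overline{v})$ equals $\sup_{\overline{x}\in[a,b]}L\left(\overline{x},(\overline{w}_{i,\widehat{\eta}})_{i=1}^{\infty}(t),(\overline{v}_{i,\widehat{\eta}})_{i=1}^{\infty}(t)\right)e^{-m\overline{x}}$, which is exactly where the inner factor $e^{-mt}$ in the hypothesis comes from. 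The estimate then runs, for two admissible tuples,
$$\left|\mathcal{U}(\overline{w})(t)-\mathcal{U}(\overline{v})(t)\right|\le|\lambda|\int_{a}^{t}\left|\mathbb{K}(t,\overline{w}(s))-\mathbb{K}(t,\overline{v}(s))\right|ds\le|\lambda|\int_{a}^{t}f\left(L(s)e^{-ms}\right)L(s)\,ds,$$
applying the hypothesis with the integration variable in the role of $\overline{x}$, where $L(s)$ abbreviates the pointwise value of the $L$-expression at $s$. Multiplying by $e^{-mt}$, using $L(s)\le e^{ms}\mathcal{M}_{\mathcal{U}_\eta}(\alpha,\overline{w},\overline{v})$ and $f\left(L(s)e^{-ms}\right)\le f\left(\mathcal{M}_{\mathcal{U}_\eta}(\alpha,\overline{w},\overline{v})\right)$, together with $e^{-mt}\int_{a}^{t}e^{ms}\,ds\le 1/m$, I obtain
$$d\left(\mathcal{U}(\overline{w}),\mathcal{U}(\overline{v})\right)\le\frac{|\lambda|}{m}\,f\left(\mathcal{M}_{\mathcal{U}_\eta}(\alpha,\overline{w},\overline{v})\right)\mathcal{M}_{\mathcal{U}_\eta}(\alpha,\overline{w},\overline{v})\le\beta\left(\mathcal{M}_{\mathcal{U}_\eta}(\alpha,\overline{w},\overline{v})\right)\mathcal{M}_{\mathcal{U}_\eta}(\alpha,\overline{w},\overline{v}),$$
since $|\lambda|/m\le1$. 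Hence $\mathcal{U}$ is a generalized $H^{\alpha}_{\eta}$ contraction, and Theorem \ref{4} yields a unique fixed point of $\mathcal{U}$, i.e. a unique solution of \eqref{eq_tag6}.

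The well-definedness of $\mathcal{U}$ and the elementary bound $e^{-mt}\int_{a}^{t}e^{ms}\,ds\le 1/m$ are routine. The main obstacle is the passage from the pointwise inequality supplied by the hypothesis to a genuine $H^{\alpha}_{\eta}$-contraction inequality in the norm $d$: because a generalized Geraghty function need not be monotone, the step $f\left(L(s)e^{-ms}\right)\le f\left(\mathcal{M}_{\mathcal{U}_\eta}(\alpha,\overline{w},\overline{v})\right)$ requires care — either one works with a nondecreasing representative of $f$, or one falls back on the cruder bound $f<\tfrac12$ and absorbs the gain $|\lambda|/m<1$ into a constant function lying in $\mathbb{G}_{\frac{1}{2}}$. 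One should also confirm carefully the identification of $\mathcal{M}_{\mathcal{U}_\eta}$ (computed with the Bielecki weight) with $\sup_{\overline{x}}L(\overline{x},\cdot,\cdot)e^{-m\overline{x}}$, and that the parameter $\alpha$ in the hypothesis is the same $\alpha$ appearing in the definition of $\mathcal{M}_{\mathcal{U}_\eta}$.
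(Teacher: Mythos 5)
Your proposal follows essentially the same route as the paper: you define the same integral operator $\mathcal{U}$ on the Bielecki-weighted complete space $\left(\mathcal{C}[a,b],d\right)$, identify the hypothesis quantity $L(\cdot)e^{-m(\cdot)}$ with the generalized function $\mathcal{M}_{\mathcal{U}_\eta}$ built from $K_\eta$ and $F_\eta$, extract the factor $\frac{|\lambda|}{m}$ from the exponential integral and choose $m$ large, and then conclude via the generalized $H^{\alpha}_{\eta}$-contraction fixed point result (Theorem \ref{4}). The only difference is that you explicitly flag and patch the non-monotonicity of $f$ in the step $f\left(L(s)e^{-ms}\right)\leq f\left(\mathcal{M}_{\mathcal{U}_\eta}\right)$ (falling back on a constant Geraghty function if needed), a point the paper's own computation passes over silently, so your argument is the same one carried out slightly more carefully.
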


\begin{proof}
In order to find a solution of the integral equation \eqref{eq_tag6} we have to transform the problem into an equivalent fixed point problem.\\ Let us define a function $T:\mathcal{C}_{\eta}(D)\rightarrow\mathcal{C}[a,b]$ by, $$\mathcal{U}\left((\overline{w}_{\i,\hat{\eta}})_{\i=1}^{\infty}(\overline{x})\right)=\lambda \int_{a}^{\overline{x}} \mathbb{K}\left(\overline{x},(\overline{w}_{\i,\hat{\eta}})_{\i=1}^{\infty}(t)\right)dt;$$ where $\mathcal{C}_{\eta}(D)$ is considered as a subset of $\mathbb{R}^{\omega}_\eta.$\\ Therefore, we have to find the fixed point of $\mathcal{U}$.\\ Now, using \eqref{eq_tag5}, we have, \begin{align*}
&d\left(\mathcal{U}\left((\overline{w}_{\i,\hat{\eta}})_{\i=1}^{\infty}(\overline{x})\right),\mathcal{U}\left((\overline{v}_{\i,\hat{\eta}})_{\i=1}^{\infty}(\overline{x})\right)\right)\\
& = \sup_{\overline{x}\in [a,b]}\left|\mathcal{U}\left((\overline{w}_{\i,\hat{\eta}})_{\i=1}^{\infty}(\overline{x})\right)-\mathcal{U}\left((\overline{v}_{\i,\hat{\eta}})_{\i=1}^{\infty}(\overline{x})\right)\right|e^{-m\overline{x}}\\
&\leq \sup_{\overline{x}\in [a,b]}\left|\lambda \int_{a}^{\overline{x}} \mathbb{K}\left(\overline{x},(\overline{w}_{\i,\hat{\eta}})_{\i=1}^{\infty}(t)\right)dt-\lambda \int_{a}^{\overline{x}} \mathbb{K}\left(\overline{x},(\overline{v}_{\i,\hat{\eta}})_{\i=1}^{\infty}(t)\right)dt\right|e^{-m\overline{x}}\\
&\leq|\lambda| \sup_{\overline{x}\in [a,b]}\left|\int_{a}^{\overline{x}} \mathbb{K}\left(\overline{x},(\overline{w}_{\i,\hat{\eta}})_{\i=1}^{\infty}(t)\right)dt- \int_{a}^{\overline{x}} \mathbb{K}\left(\overline{x},(\overline{v}_{\i,\hat{\eta}})_{\i=1}^{\infty}(t)\right)dt\right|e^{-m\overline{x}}\\
&\leq|\lambda| \sup_{\overline{x}\in [a,b]}\int_{a}^{\overline{x}} \left|\mathbb{K}\left(\overline{x},(\overline{w}_{\i,\hat{\eta}})_{\i=1}^{\infty}(t)\right)- K\left(\overline{x},(\overline{v}_{\i,\hat{\eta}})_{\i=1}^{\infty}(t)\right)\right|dt~e^{-m\overline{x}}\\
&\leq|\lambda| \sup_{\overline{x}\in [a,b]}\int_{a}^{\overline{x}} \leq f\left(L\left(\overline{x},(\overline{w}_{\i,\hat{\eta}})_{\i=1}^{\infty}(t),(\overline{v}_{i,\hat{\eta}})_{\i=1}^{\infty}(t))\right)e^{-mt}\right)L\left(\overline{x},(\overline{w}_{\i,\hat{\eta}})_{\i=1}^{\infty}(t),(\overline{v}_{\i,\hat{\eta}})_{\i=1}^{\infty}(t))\right)e^{-m\overline{x}}.\\
\end{align*}

We have,
\begin{align*}
\begin{split}
&L\left(\overline{x},(\overline{w}_{\i,\hat{\eta}})_{\i=1}^{\infty}(t),(\overline{v}_{\i,\hat{\eta}})_{\i=1}^{\infty}(t))\right)e^{-m\overline{x}} \\
=&\left[\alpha\left\{\left|\overline{w}_\eta(t)-\lambda\int_{a}^{\overline{x}} \mathbb{K}(\overline{x},(\overline{w}_{\i,\hat{\eta}})_{\i=1}^{\infty}(s))ds\right|+\left|\overline{v}_\eta(t)-\lambda\int_{a}^{\overline{x}} \mathbb{K}(\overline{x},(\overline{v}_{\i,\hat{\eta}})_{\i=1}^{\infty}(s))ds\right|\right\}+ \right.  \\
&\left.(1-\alpha)\left\{\left|\overline{w}_\eta(t)-\lambda\int_{a}^{\overline{x}} \mathbb{K}(\overline{x},(\overline{v}_{\i,\hat{\eta}})_{\i=1}^{\infty}(s))ds\right|+\left|\overline{v}_\eta(t)-\lambda\int_{a}^{\overline{x}} \mathbb{K}(\overline{x},(\overline{w}_{\i,\hat{\eta}})_{\i=1}^{\infty}(s))ds\right|\right\}\right]e^{-m\overline{x}} \\
\leq& \sup_{t\in [a,b]}\left[\alpha\left\{\left|\overline{w}_\eta(t)-\lambda\int_{a}^{\overline{x}} \mathbb{K}(\overline{x},(\overline{w}_{\i,\hat{\eta}})_{\i=1}^{\infty}(s))ds\right|+\left|\overline{v}_\eta(t)-\lambda\int_{a}^{\overline{x}} \mathbb{K}(\overline{x},(\overline{v}_{\i,\hat{\eta}})_{\i=1}^{\infty}(s))ds\right|\right\}+ \right.  \\
&\left.(1-\alpha)\left\{\left|\overline{w}_\eta(t)-\lambda\int_{a}^{\overline{x}} \mathbb{K}(\overline{x},(\overline{v}_{\i,\hat{\eta}})_{\i=1}^{\infty}(s))ds\right|+\left|\overline{v}_\eta(t)-\lambda\int_{a}^{\overline{x}} \mathbb{K}(\overline{x},(\overline{w}_{\i,\hat{\eta}})_{\i=1}^{\infty}(s))ds\right|\right\}\right]e^{-m\overline{x}} \\
\leq& \sup_{t\in [a,b]}\left[\alpha\left\{\left|\overline{w}_\eta(t)-\mathcal{U}\left((\overline{w}_{\i,\hat{\eta}})_{\i=1}^{\infty}(\overline{x})\right)\right|+\left|\overline{v}_\eta(t)-\mathcal{U}\left((\overline{v}_{\i,\hat{\eta}})_{\i=1}^{\infty}(\overline{x})\right)\right|\right\}+ \right.  \\
&\left.(1-\alpha)\left\{\left|\overline{w}_\eta(t)-\mathcal{U}\left((\overline{v}_{\,\hat{\eta}})_{\i=1}^{\infty}(\overline{x})\right)\right|+\left|\overline{v}_\eta(t)-\mathcal{U}\left((\overline{w}_{\i,\hat{\eta}})_{\i=1}^{\infty}(\overline{x})\right)\right|\right\}\right]e^{-mt}e^{-m(\overline{x}-t)} \\
\leq& ~\alpha\left\{d\left(\overline{w}_\eta(x),\mathcal{U}\left((\overline{w}_{\i,\hat{\eta}})_{\i=1}^{\infty}(\overline{x})\right)\right)+d\left(\overline{v}_\eta(x),\mathcal{U}\left((\overline{v}_{\i,\hat{\eta}})_{\i=1}^{\infty}(\overline{x})\right)\right)\right\}+   \\
&(1-\alpha)\left\{d\left(\overline{w}_\eta(\overline{x}),\mathcal{U}\left((\overline{v}_{\i,\hat{\eta}})_{\i=1}^{\infty}(\overline{x})\right)\right)+d\left(\overline{v}_\eta(\overline{x}),\mathcal{U}\left((\overline{w}_{\i,\hat{\eta}})_{\i=1}^{\infty}(\overline{x})\right)\right)\right\}e^{-m(\overline{x}-t)} . \\
\end{split}
\end{align*}

Therefore,\begin{align*}
\begin{split}
&f\left(L\left(\overline{x},(\overline{w}_{\i,\hat{\eta}})_{i=1}^{\infty}(t),(\overline{v}_{\i,\hat{\eta}})_{i=1}^{\infty}(t))\right)e^{-mt}\right) \\
=& f\left(\alpha\left\{d\left(\overline{w}_\eta(t),\mathcal{U}\left((\overline{w}_{\i,\hat{\eta}})_{\i=1}^{\infty}(t)\right)\right)+d\left(\overline{v}_\eta(t),\mathcal{U}\left((\overline{v}_{\i,\hat{\eta}})_{\i=1}^{\infty}(t)\right)\right)\right\}+ \right.  \\
&\left. (1-\alpha)\left\{d\left(\overline{w}_\eta(t),\mathcal{U}\left((\overline{v}_{\i,\hat{\eta}})_{\i=1}^{\infty}(t)\right)\right)+d\left(\overline{v}_\eta(t),\mathcal{U}\left((\overline{w}_{\i,\hat{\eta}})_{\i=1}^{\infty}(t)\right)\right)\right\}\right). \\
\end{split}
\end{align*}
Thus, we get,\begin{align*}
\begin{split}
&d\left(\mathcal{U}\left((\overline{w}_{i,\hat{\eta}})_{\i=1}^{\infty}(\overline{x})\right),\mathcal{U}\left((\overline{v}_{\i,\hat{\eta}})_{\i=1}^{\infty}(\overline{x})\right)\right)\\
&\leq|\lambda| \sup_{\overline{x}\in [a,b]} \int_{a}^{\overline{x}}f\left(\alpha\left\{d\left(\overline{w}_\eta(\overline{x}),\mathcal{U}\left((\overline{w}_{\i,\hat{\eta}})_{\i=1}^{\infty}(\overline{x})\right)\right)+d\left(\overline{v}_\eta(x),\mathcal{U}\left((\overline{v}_{\i,\hat{\eta}})_{\i=1}^{\infty}(\overline{x})\right)\right)\right\}+ \right.  \\
&\left.(1-\alpha)\left\{d\left(\overline{w}_\eta(\overline{x}),\mathcal{U}\left((\overline{v}_{\i,\hat{\eta}})_{\i=1}^{\infty}(\overline{x})\right)\right)+d\left(\overline{v}_\eta(\overline{x}),\mathcal{U}\left((\overline{w}_{\i,\hat{\eta}})_{\i=1}^{\infty}(\overline{x})\right)\right)\right\} \right)\\
&\left[\alpha\left\{d\left(\overline{w}_\eta(\overline{x}),\mathcal{U}\left((\overline{w}_{\i,\hat{\eta}})_{\i=1}^{\infty}(\overline{x})\right)\right)+d\left(\overline{v}_\eta(\overline{x}),\mathcal{U}\left((\overline{v}_{\i,\hat{\eta}})_{\i=1}^{\infty}(\overline{x})\right)\right)\right\}+ \right.  \\
&\left.(1-\alpha)\left\{d\left(\overline{w}_\eta(\overline{x}),\mathcal{U}\left((\overline{v}_{\i,\hat{\eta}})_{\i=1}^{\infty}(\overline{x})\right)\right)+d\left(\overline{v}_\eta(\overline{x}),\mathcal{U}\left((\overline{w}_{\i,\hat{\eta}})_{\i=1}^{\infty}(\overline{x})\right)\right)\right\}\right]e^{-m(\overline{x}-t)}dt.
\end{split}
\end{align*}
Therefore,\begin{align*}
\begin{split}
&d\left(T\left((\overline{w}_{\i,\hat{\eta}})_{\i=1}^{\infty}(\overline{x})\right),\mathcal{U}\left((\overline{v}_{\i,\hat{\eta}})_{\i=1}^{\infty}(\overline{x})\right)\right)\\
&\leq \frac{|\lambda| }{m}f\left(\alpha\left\{d\left(\overline{w}_\eta(\overline{x}),\mathcal{U}\left((\overline{w}_{\i,\hat{\eta}})_{\i=1}^{\infty}(\overline{x})\right)\right)+d\left(\overline{v}_\eta(\overline{x}),\mathcal{U}\left((\overline{v}_{\i,\hat{\eta}})_{\i=1}^{\infty}(\overline{x})\right)\right)\right\}+ \right.  \\
&\left.(1-\alpha)\left\{d\left(\overline{w}_\eta(\overline{x}),\mathcal{U}\left((\overline{v}_{\i,\hat{\eta}})_{\i=1}^{\infty}(\overline{x})\right)\right)+d\left(\overline{v}_\eta(\overline{x}),\mathcal{U}\left((\overline{w}_{\i,\hat{\eta}})_{\i=1}^{\infty}(\overline{x})\right)\right)\right\} \right)\\
&\left[\alpha\left\{d\left(\overline{w}_\eta(\overline{x}),\mathcal{U}\left((\overline{w}_{\i,\hat{\eta}})_{\i=1}^{\infty}(\overline{x})\right)\right)+d\left(\overline{v}_\eta(\overline{x}),\mathcal{U}\left((\overline{v}_{\i,\hat{\eta}})_{\i=1}^{\infty}(\overline{x})\right)\right)\right\}+ \right.  \\
&\left.(1-\alpha)\left\{d\left(\overline{w}_\eta(\overline{x}),\mathcal{U}\left((\overline{v}_{\i,\hat{\eta}})_{\i=1}^{\infty}(\overline{x})\right)\right)+d\left(\overline{v}_\eta(\overline{x}),\mathcal{U}\left((\overline{w}_{\i,\hat{\eta}})_{\i=1}^{\infty}(\overline{x})\right)\right)\right\}\right][1-e^{-m(b-a)}]
\end{split}
\end{align*}

If we choose $m\geq|\lambda|~[1-e^{-m(b-a)}]$, we get,
\begin{align*}
\begin{split}
&d\left(\mathcal{U}\left((\overline{w}_{\i,\hat{\eta}})_{\i=1}^{\infty}(\overline{x})\right),\mathcal{U}\left((\overline{v}_{\i,\hat{\eta}})_{\i=1}^{\infty}(\overline{x})\right)\right)\\
\leq& f\left(\alpha\left\{d\left(\overline{w}_\eta(\overline{x}),\mathcal{U}\left((\overline{w}_{\i,\hat{\eta}})_{\i=1}^{\infty}(\overline{x})\right)\right)+d\left(\overline{v}_\eta(\overline{x}),\mathcal{U}\left((\overline{v}_{\i,\hat{\eta}})_{\i=1}^{\infty}(\overline{x})\right)\right)\right\}+ \right.  \\
&\left.(1-\alpha)\left\{d\left(\overline{w}_\eta(\overline{x}),\mathcal{U}\left((\overline{v}_{\i,\hat{\eta}})_{\i=1}^{\infty}(\overline{x})\right)\right)+d\left(\overline{v}_\eta(\overline{x}),\mathcal{U}\left((\overline{w}_{\i,\hat{\overline{\eta}}})_{\i=1}^{\infty}(\overline{x})\right)\right)\right\} \right)\\
&\left[\alpha\left\{d\left(\overline{w}_\eta(\overline{x}),\mathcal{U}\left((\overline{w}_{\i,\hat{\eta}})_{\i=1}^{\infty}(\overline{x})\right)\right)+d\left(\overline{v}_\eta(\overline{x}),\mathcal{U}\left((\overline{v}_{\i,\hat{\eta}})_{\i=1}^{\infty}(\overline{x})\right)\right)\right\}+ \right.  \\
&\left.(1-\alpha)\left\{d\left(\overline{w}_\eta(\overline{x}),\mathcal{U}\left((\overline{v}_{\i,\hat{\eta}})_{\i=1}^{\infty}(\overline{x})\right)\right)+d\left(\overline{v}_\eta(\overline{x}),\mathcal{U}\left((\overline{w}_{\i,\hat{\eta}})_{\i=1}^{\infty}(\overline{x})\right)\right)\right\}\right]\\
\end{split}
\end{align*}

Therefore, $\mathcal{U}$ is a generalized $H_{\alpha}^{\eta}$-contraction mapping and hence, it has a unique fixed point. Then, the integral equation \eqref{eq_tag5} has a unique solution.
\end{proof}

\section{Conclusions}
\quad In this paper we considered some metric hypothesis can be applied to extend the Banach, Kannan and Fisher's theorem in its most general setup possible.  We used this new idea to find some infinite extensions of the other type of contractions and its related operators to prove the existence of fixed points. There are some open questions which arise from these extensions.
\begin{enumerate}
	\item If we consider uncountable product of the complete metric space $(X,d)$ the resultant space $\Pi_{\lambda\in\Lambda}X_{\lambda}$ is not a metric space. Can we prove a generalizations of the above results in these type of topological spaces, which can be expressed as uncountable product of complete metric space?
	\item Can all the results be proved for the notion of coincidence points?
\end{enumerate}

\textbf{Conflicts of Interest} Authors declare that there is no conflicts of interest regarding the publication of this paper.

\textbf{Acknowledgements} The first thank to Suprokash Hazra for his valuable comments in order to improve the paper.


\begin{thebibliography}{100}
	
\bibitem{Kho} M. Al Shumrani, H. Ayedi, C. Özel, S. Hazra, Geraghty Extension to $K$-Dimension, \emph{SER Math Inform.}, Vol.33, \boldmath{2}: pp.197-202,2018. https://doi.org/10.22190/FUMI1802197A.

\bibitem{Ban} S. Banach, Sur les opérations dans les ensembles abstraits et leur application aux équations intégrales, \emph{Fundam. Math. }, \boldmath(3):133-181, 1922.

\bibitem{Fis} B. Fisher, A Fixed Point Theorem, \emph{Mathematical Association of America}, Vol.48, \boldmath{4}: pp.223-225,Sep,1975. https://doi.org/10.1080/0025570X.1975.11976494	

\bibitem{G} M. Geragthy, On Contractive Mappings, \emph{Am. Math Soc.},\boldmath(40):604-608, 1973. https://doi.org/10.2307/1039421.

\bibitem{Kan} R. Kannan, Some Results on fixed points, \emph{Amer. Math. Monthly}, Vol.76, \boldmath{4}: pp.405-408,1969. https://doi.org/10.2307/2316437
	

\end{thebibliography}
\end{document}